\documentclass[reqno,12pt]{amsart}
\usepackage{latexsym,graphicx,epsfig,graphics}

\usepackage{amsmath, amsthm, amssymb}
\usepackage{pgf,tikz}
\usepackage{url}
\usepackage{verbatim}
\usepackage{enumerate, enumitem}
\usepackage{rotating}
\usepackage{array}
\usepackage{longtable}
\usepackage{calc}
\usepackage{multirow}
\usepackage{hhline}
\usepackage{ifthen}
\usepackage{amsthm}
\usepackage{mathrsfs}
\usepackage{amsfonts}
\usepackage{eucal}
\usepackage{setspace}

\usetikzlibrary{arrows,topaths,calc}
\definecolor{sqsqsq}{rgb}{0.13,0.13,0.13}
\textwidth=16.5cm
\textheight=22cm
\hoffset=-2cm
\voffset=-2cm
\topmargin 10mm
\evensidemargin 20mm
\oddsidemargin 20mm
\tolerance1000

\theoremstyle{plain}
\newtheorem{theorem}{Theorem}[section]
\newtheorem{proposition}[theorem]{Proposition}
\newtheorem{corollary}[theorem]{Corollary}
\newtheorem{lemma}[theorem]{Lemma}

\theoremstyle{definition}
\newtheorem{definition}[theorem]{Definition}

\newtheorem{example}[theorem]{Example}
\newtheorem{conjecture}[theorem]{Conjecture}

\numberwithin{equation}{section}
\title{Iteration of the mincut graph operator}

\author{C. Kriel$^{\S}$ \and E. Mphako-Banda$^{\S}$ }
\begin{document}

\date{}
\maketitle
 
\centerline{$^\S$ School of Mathematics,}
\centerline{University of the Witwatersrand, PB 3, WITS, 2050,
RSA.}
\centerline{ Email: christo.kriel@wits.ac.za; eunice.mphako-banda@wits.ac.za}

\begin{abstract}
A graph operator is a mapping $\phi$ which maps every graph $G$ from some class of graphs to a new graph $\phi(G)$. In this paper, we introduce and study the properties of the \emph{mincut} operator, specifically the effects of iteration of the operator. We show that the property of being super edge-connected and regular is both necessary and sufficient for a graph to remain fixed under the mincut operator. Furthermore, we show that no graph diverges under iteration of this operator. We conclude by stating further research questions on the mincut operator.
\par
\vspace{.1in}
\noindent Keywords: Connectivity, edge-cut set, mincut, intersection graph, graph operator.\\
\noindent Subject Classification: 05C40, 05C70, 05C76

\end{abstract}
\doublespacing
\section{Introduction}
\label{sec:intro}
Let $\mathcal{F}=\{S_1, S_2 \ldots , S_n\}$ be any family of sets. The \emph{intersection graph} of $\mathcal{F}$, denoted $\Omega (\mathcal{F})$ is the graph having $\mathcal{F}$ as vertex set with $S_i$ adjacent to $S_j$ if $S_i\cap S_j \neq \emptyset$, see \cite{intersectiongr}. In $1945$, Szpilrajn-Marczewski proved that every graph is an intersection graph, that is, for every graph $G$ there is a family of sets $\mathcal{F}$ such that $G$ is the intersection graph of $\mathcal{F}$, see \cite{graphreps}. In \cite{mingraph}, an intersection graph, the \emph{mincut graph} of a graph, with vertex set the minimum edge-cuts of a graph, such that two vertices in the intersection graph are adjacent if their corresponding minimum edge-cut sets have non-empty intersection, was introduced. Furthermore, it was shown that every graph is a mincut graph, that is, every graph lies in the image of the mincut operator. In this paper, we follow the research programme on graph operators, as introduced by Prisner in the 1995 monograph ``Graph Dynamics''. Thus, we show that graphs that are fixed under the mincut operator are super edge-connected and regular. In addition, we show that all graphs eventually converge under iteration of the mincut operator, that is, after a sufficiently large number of iterations no graph increases in size or order.

\begin{definition}
	Let $G$ be a simple connected graph, then an edge-cut of $G$ is a subset $X$ of $E(G)$, such that $G-X$ is disconnected. An edge-cut of minimum cardinality in $G$ is a \emph{minimum edge-cut} and its cardinality is the edge-connectivity of $G$, denoted $\lambda(G)$. We will call such a minimum edge-cut a \emph{mincut} of $G$. We call a mincut that disconnects a single vertex from the graph a \emph{trivial cut}.
\end{definition}

\begin{definition}\cite{mingraph}
	Let $X=\{X_1, \, X_2, \, \ldots \, X_i\}$ be the set of all mincuts of a simple connected graph $G$. Represent each of the $X_i$ with a vertex $x_i$ such that two vertices $x_i$ and $x_j$ are adjacent if $X_i\cap X_j \neq \emptyset$ and call this intersection graph the \emph{mincut graph} of $G$, denoted by $X(G)$.
	\label{def:XG}
\end{definition}

\begin{example}
	In Figure \ref{fig:mincutgraphs}, we give some examples of the mincut graphs of trees, $T_n$, complete graphs, $K_n$, cycles, $C_n$, and wheels $W_n$.
\end{example}

\begin{figure}[!h]
	\begin{center}
		\includegraphics[width=\textwidth]{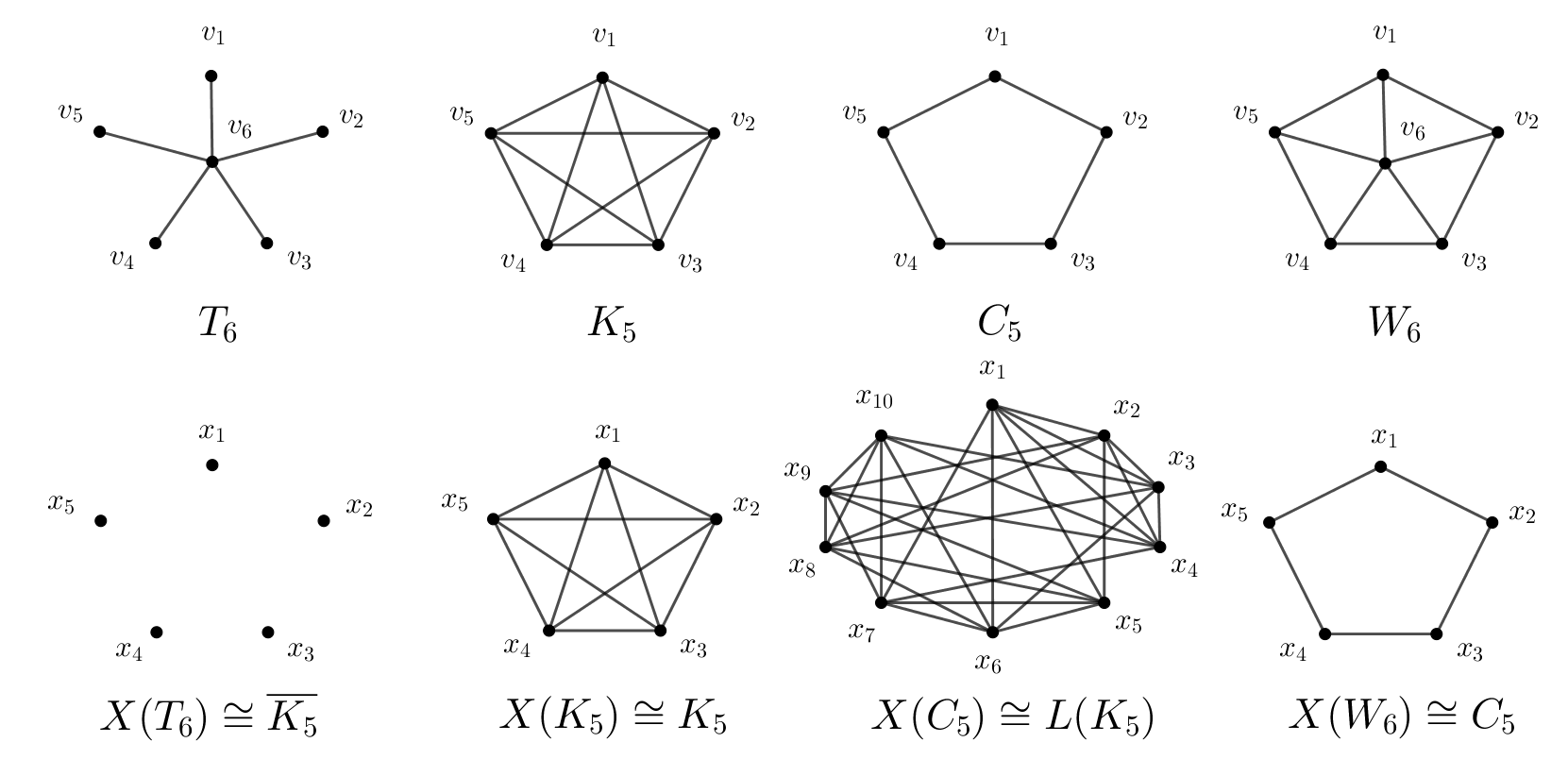}
	\end{center}
	\caption{Mincut graphs of some well-known classes of graphs.}
	\label{fig:mincutgraphs}
\end{figure}

As the line graph of a graph $G$ reflects the mutual positions of the edges, see \cite{prissurv}, so the mincut graph reflects the mutual positions of the minimum edge-cuts.

\begin{definition}\cite{prissurv}
	A graph operator is a mapping $\phi$ which maps every graph $G$ from some class of graphs to a new graph $\phi(G)$.
	\label{def:graphop}
\end{definition}

We define the operator recursively, such that, if $\phi$ is an operator and $k$ a positive integer, then $\phi^1=\phi$ and $\phi^k(G)=\phi(\phi^{k-1}(G))$, for $k\geq 2$, see \cite{grdynamics,prissurv}. In \cite{grdynamics}, Prisner unifies the study of graph operators under such questions as  `Which graphs appear as images of graphs?'; `Which graphs are fixed under the operator?'; `What happens if the operator is iterated?'.

In \cite{mingraph}, the question as to which graphs appear as images of graphs was answered by showing that every graph is a mincut graph of an infinite family of non-isomorphic graphs, that is, every graph has an infinite number of $X$-roots, where a graph $H$ is an $X$-root of $G$ if $X(H)\cong G$. In addition, it was also shown that every graph has infinite depth, that is, given a graph $G_0$, there is some graph $G_{(-1)}$ such that $X(G_{(-1)})\cong G_0$ and some graph $G_{(-2)}$ such that $X(G_{(-2)})\cong G_{(-1)}$. Thus, we can construct a set $\{\ldots,\, G_{(-3)}, \, G_{(-2)}, \, G_{(-1)}, \, G_0\}$ where every $X(G_{(-n)})\cong G_{(-n+1)}$ for all $n\geq 1$ and $X^n(G_{(-n)})\cong G_0$.

In this paper, we investigate Prisner's remaining questions as follows:

\begin{enumerate}
	\item When is a graph isomorphic to its mincut graph?
	\item What happens when the mincut operator is iterated?
\end{enumerate}

\section{Super edge-connected graphs}
\label{sec:superedge}
In this section, we define super-edge connected, or super-$\lambda$, graphs and give a sufficient condition for a mincut graph of a graph $G$ to be isomorphic to $G$.

\begin{definition}\cite{handbookgraphth}
	A graph $G$ is \emph{maximally edge connected} when $\lambda=\delta$, where $\lambda$ is the cardinality of a minimum edge-cut and $\delta$ is the minimum vertex degree of $G$.
	\label{def:maxedge}
\end{definition}

\begin{definition}\cite{handbookgraphth}
	A graph $G$ is super edge-connected, \emph{super-$\lambda$}, if every minimum edge-cut set is \emph{trivial}; that is, consists of the edges incident on a vertex of minimum degree.
	\label{def:superlambda}
\end{definition}

The following proposition states five sufficient conditions for a graph to be super-$\lambda$, see \cite{handbookgraphth}. The first two conditions were proved by Lesniak in 1974, see \cite{lessuperl}.

\begin{proposition}
	\label{prop:suffsuperl}
	Let $G$ be a graph of order $n$, minimum degree $\delta$ and maximum degree $\Delta$. Then $G$ is super-$\lambda$ if any of the following conditions are satisfied:
	\begin{enumerate}
		\item Let $G\neq K_{n/2} \Box K_2$, the cartesian product of $K_{n/2}$ and $K_2$. If for any non-adjacent $u,v\in V(G)$, $\textrm{deg}(u)+\textrm{deg}(v)\geq n,$	then $G$ is super-$\lambda$.
		
		\item If for any non-adjacent $u,v\in V(G)$, $\textrm{deg}(u)+\textrm{deg}(v)\geq n+1,$	then $G$ is super-$\lambda$.
		
		\item If $\delta \geq \lfloor \frac{n}{2} \rfloor +1$, then $G$ is super-$\lambda$.
		
		\item If $G$ has diameter $2$ and contains no complete subgraph $H$ on $\delta$ vertices with $\textrm{deg}_G(v)=\delta$ for all $v\in V(H)$, then $G$ is super-$\lambda$.
		
		\item If $G$ has diameter $2$ and $n>2\delta+\Delta-1,$ then $G$ is super-$\lambda$.
	\end{enumerate}
	
\end{proposition}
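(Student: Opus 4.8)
These five implications are classical: (1) and (2) are Lesniak's theorems \cite{lessuperl}, and all five are collected in \cite{handbookgraphth}, so one may simply cite them. Were I to prove them from scratch, I would run the following common argument. Suppose $G$ is not super-$\lambda$; then $G$ has a \emph{non-trivial} minimum edge-cut, i.e.\ a partition $V(G)=S\cup\bar S$ with $|[S,\bar S]|=\lambda$ and $2\le|S|\le|\bar S|$, and I set $s=|S|$, so $2\le s\le n/2$. The engine throughout is the identity
\[
\lambda=|[S,\bar S]|=\sum_{v\in S}\deg(v)-2\,e(G[S]),
\]
combined with $\lambda\le\delta\le n-1$ and $e(G[S])\le\binom{s}{2}$; I would also note at the outset that $G[S]$ and $G[\bar S]$ must be connected, since otherwise one piece of $S$ is either a component of $G$ or gives an edge-cut smaller than $\lambda$. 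For (3) the identity yields $\lambda\ge s\delta-s(s-1)=s(\delta-s+1)$; this function of $s$ is concave, so over $2\le s\le\lfloor n/2\rfloor$ its minimum is attained at an endpoint, where (using $\delta\ge\lfloor n/2\rfloor+1$) it already exceeds $\delta$ — a contradiction. The cases $n\le 3$ are trivial.

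For (4) and (5) the extra hypothesis is diameter $2$. Since $s\ge2$ forces $s(n-s)\ge n>\lambda$, some $u\in S$ is non-adjacent to some $w\in\bar S$, so $u$ and $w$ are at distance exactly $2$, have a common neighbour, and every shortest $u$–$w$ path crosses the cut. I would then count the cut edges seen from $u$ and from $w$ against $\deg(u)+\deg(w)$ and feed in the side conditions — no complete subgraph on $\delta$ vertices all of degree $\delta$ for (4), and $n>2\delta+\Delta-1$ for (5) — to force more than $\delta$ cut edges, against $\lambda\le\delta$. This is the Plesník-style diameter-$2$ argument.

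Conditions (1) and (2) are the real obstacle, since a single non-adjacent pair only gives $\lambda\ge3$, which is not enough. The plan is to choose $S$ of \emph{minimum} cardinality among the sides of non-trivial minimum edge-cuts. Then for each $v\in S$ the cut $[S\setminus\{v\},\overline{S\setminus\{v\}}]$ has size $\lambda+\deg(v)-2d_v$, where $d_v$ is the number of cut edges at $v$, and minimality of $|S|$ forces this to exceed $\lambda$ once $s\ge3$; hence $d_v\le(\deg(v)-1)/2$ for all $v\in S$, and summing, together with $e(G[S])\le\binom{s}{2}$, gives the upper bound $\lambda\le s(s-2)$ (the case $s=2$ is dispatched directly, using that the degree-sum hypothesis forbids a vertex of degree $\le 2$ having $n-2$ non-neighbours). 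For the matching lower bound I would pick a non-adjacent pair $u\in S$, $w\in\bar S$ with $u$ of smallest cut-degree in $S$ and propagate $\deg(u)+\deg(w)\ge n$ (for (1)) or $\ge n+1$ (for (2)) through the local structure it imposes, arriving at a lower bound on $\lambda$ incompatible with $\lambda\le\min\{\delta,s(s-2)\}$; the excluded graph $K_{n/2}\Box K_2$ in (1) is exactly the configuration in which all of these inequalities become equalities. Carrying out this bookkeeping uniformly in $s$ — rather than $s$ by $s$ — is the technical heart of the argument, and for the full details I would follow \cite{lessuperl, handbookgraphth}.
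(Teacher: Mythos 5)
The paper gives no proof of this proposition: it is stated as a cited result, with (1) and (2) attributed to Lesniak \cite{lessuperl} and all five conditions collected from \cite{handbookgraphth}, which is exactly what your opening sentence proposes. Your appended sketch is a reasonable outline of the standard arguments (the identity $\lambda=\sum_{v\in S}\deg(v)-2e(G[S])$, the bound $\lambda\ge s(\delta-s+1)$ for (3), and the minimum-side bound $\lambda\le s(s-2)$ for (1)--(2) all check out), but as you yourself acknowledge it is not a complete proof of (1), (2), (4) or (5), so the citation is doing the real work here, just as it does in the paper.
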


\begin{proposition}
	If $G$ is $r$-regular and super-$\lambda$, $G\ncong K_2$, then $X(G)\cong G$.
	\label{prop:X(G)=G}
\end{proposition}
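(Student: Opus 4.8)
The plan is to pin down the vertex set of $X(G)$ explicitly and then show that the obvious correspondence between $V(G)$ and $V(X(G))$ is an isomorphism. Write $\partial(v)$ for the set of edges of $G$ incident with a vertex $v$. The first step will be to observe that, since $G$ is $r$-regular, $\delta(G)=\Delta(G)=r$, and since $G$ is super-$\lambda$ every minimum edge-cut is trivial, i.e.\ equals $\partial(v)$ for some vertex $v$ of minimum degree; as every vertex has degree $r$, this already gives $\lambda(G)=r$ and shows that every mincut of $G$ is of the form $\partial(v)$. Conversely, for each $v\in V(G)$ (here we may assume $r\ge 1$, so that $G$ has at least two vertices; the graph $K_1$ has no mincut and is tacitly excluded) the removal of $\partial(v)$ isolates $v$, so $\partial(v)$ is an edge-cut, and $|\partial(v)|=r=\lambda(G)$, so it is a mincut. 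Hence the mincuts of $G$ are exactly the sets $\partial(v)$, $v\in V(G)$, and $V(X(G))=\{\,\partial(v):v\in V(G)\,\}$.

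Next I would check that the map $v\mapsto\partial(v)$ is a bijection onto this vertex set. Surjectivity is immediate from the first step, so only injectivity requires an argument: if $\partial(u)=\partial(v)$ for distinct vertices $u,v$, then any edge incident with $u$ is also incident with $v$, which forces that edge to be $uv$ and leaves $u$ with no other incident edge, so $\deg(u)=1$, i.e.\ $r=1$; since $G$ is connected this makes $G\cong K_2$, contradicting the hypothesis. Thus $v\mapsto\partial(v)$ is a bijection $V(G)\to V(X(G))$.

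Finally I would verify that this bijection preserves adjacency. Since $G$ is simple, two distinct vertices $u,v$ have at most one common incident edge, namely $uv$ if it is present; hence $\partial(u)\cap\partial(v)=\{uv\}$ when $uv\in E(G)$ and $\partial(u)\cap\partial(v)=\emptyset$ otherwise. By Definition~\ref{def:XG}, the vertices of $X(G)$ corresponding to $\partial(u)$ and $\partial(v)$ are adjacent exactly when $\partial(u)\cap\partial(v)\neq\emptyset$, hence exactly when $uv\in E(G)$. Combined with the bijection of the previous paragraph, this shows $v\mapsto\partial(v)$ is a graph isomorphism, so $X(G)\cong G$.

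The proof is short and amounts mostly to bookkeeping; the two points that need care are the use of the hypothesis $G\ncong K_2$ (together with the implicit exclusion of the degenerate $K_1$), which is precisely what guarantees injectivity of $v\mapsto\partial(v)$, and the use of simplicity of $G$, which is what makes $|\partial(u)\cap\partial(v)|\le 1$ and hence makes adjacency in $X(G)$ coincide with adjacency in $G$. I do not anticipate a genuine obstacle.
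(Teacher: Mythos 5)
Your proof is correct and follows essentially the same approach as the paper's: identify the mincuts with the trivial cuts $\partial(v)$ and check that $v\mapsto\partial(v)$ is an adjacency-preserving bijection. In fact your write-up is more careful than the paper's, which does not explicitly verify injectivity (the role of $G\ncong K_2$) or the non-adjacency direction (where simplicity of $G$ is used).
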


\begin{proof}
	Since $G$ is super-$\lambda$, the mincuts of $G$ are exactly the edges incident on the vertices of minimum degree. Since $G$ is regular, this is true for every vertex of $G$. Hence, there is a one-to-one correspondence between the vertices of $X(G)$ and the vertices of $G$. If two vertices are adjacent in $G$, their corresponding trivial mincuts have non-empty intersection, and hence the adjacencies in $G$ are preserved in $X(G)$. 
\end{proof}

\begin{corollary} Let $K_n$ be the complete graph on $n$ vertices, $K_{n,n}$ the complete bipartite graph with equal vertex partitions and $L(K_n)$ the line graph of the complete graph. If $n>2$, then
	\begin{enumerate}[label=(\roman*)]
		\item $X(K_n)\cong K_n$
		\item $X(K_{n,n})\cong K_{n,n}$
		\item $X(L(K_n))\cong L(K_n).$
	\end{enumerate}
	\label{corol:Xsuperl}
\end{corollary}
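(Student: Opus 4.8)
The plan is to deduce all three isomorphisms from Proposition \ref{prop:X(G)=G}: each of $K_n$, $K_{n,n}$ and $L(K_n)$ is a regular graph which, for $n>2$, is not $K_2$, so it suffices to verify in each case that the graph is super-$\lambda$. Regularity is immediate: $K_n$ is $(n-1)$-regular, $K_{n,n}$ is $n$-regular, and $L(K_n)$ is $2(n-2)$-regular, since an edge $uv$ of $K_n$ meets precisely the $n-2$ other edges at $u$ and the $n-2$ other edges at $v$. That none of the three is $K_2$ when $n>2$ is clear from their orders $n$, $2n$ and $\binom{n}{2}$.

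For $K_n$ I would establish super-$\lambda$ directly: a minimum edge-cut of $K_n$ is a bond, hence separates a set of $k$ vertices from the remaining $n-k$ and so has exactly $k(n-k)$ edges; since $k(n-k)=\lambda=n-1$ forces $k=1$ or $k=n-1$, every mincut is trivial. (Alternatively, Proposition \ref{prop:suffsuperl}(1) applies vacuously, as $K_n$ has no non-adjacent pair and $K_n\neq K_{n/2}\Box K_2$ for $n>2$; or Proposition \ref{prop:suffsuperl}(3) applies, since $\delta=n-1\geq\lfloor n/2\rfloor+1$ whenever $n\geq3$.) For $K_{n,n}$ I would invoke Proposition \ref{prop:suffsuperl}(4): $K_{n,n}$ has diameter $2$, it is triangle-free, and $\delta=n\geq3$ when $n>2$, so it contains no complete subgraph on $\delta$ vertices and is therefore super-$\lambda$. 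The hypothesis $n>2$ is genuinely needed here, since $K_{2,2}=C_4$ is not super-$\lambda$.

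The case $L(K_n)$ is where I expect the only real work, and I would again apply Proposition \ref{prop:suffsuperl}(4). For $n\geq4$ the graph $L(K_n)$ has diameter $2$ — two disjoint edges $ab$ and $cd$ of $K_n$ are joined through the edge $ac$ — and its maximal cliques are exactly the stars of edges through a common vertex of $K_n$, of size $n-1$, together with the triangles of $K_n$, of size $3$, so its clique number equals $\max\{n-1,3\}=n-1$. Since $\delta=2(n-2)>n-1$ precisely when $n>3$, for $n\geq4$ the graph $L(K_n)$ has no complete subgraph on $\delta$ vertices, and Proposition \ref{prop:suffsuperl}(4) shows it is super-$\lambda$; the remaining value $n=3$ is degenerate, since $L(K_3)\cong K_3$, already handled above. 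With super-$\lambda$ and regularity established, Proposition \ref{prop:X(G)=G} closes all three parts. The main obstacle is thus bookkeeping rather than conceptual: correctly identifying the edge-connectivity and clique structure of $L(K_n)$, and treating the small values $n=3,4$ by hand.
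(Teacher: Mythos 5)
Your proposal is correct and follows the same route as the paper: regularity plus super-$\lambda$ followed by an appeal to Proposition \ref{prop:X(G)=G}. The only difference is that the paper dismisses super-$\lambda$ for all three families as ``clear,'' whereas you actually verify it (directly for $K_n$, and via Proposition \ref{prop:suffsuperl}(4) for $K_{n,n}$ and $L(K_n)$), which is a welcome filling-in of detail rather than a new approach.
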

\begin{proof}
	If $n=2$, $(i)$ is excluded by Proposition \ref{prop:X(G)=G} and $(iii)$ is excluded since $L(K_2)\cong K_1$. We have $K_{2,2}\cong C_4$ and $X(C_4)\cong L(K_4)$. If $n>2$ all three families of graphs are clearly regular and super-$\lambda$.
\end{proof}

\section{When is a graph isomorphic to its mincut graph?}
\label{sec:isoX}

In this section, we address the question of which graphs are fixed under the mincut operator, that is, remain unchanged under the operator. We show that the property of a graph $G$ being super-$\lambda$ and $r$-regular is not only sufficient but also necessary to have $X(G)\cong G$. We start by examining the structure of mincuts in more detail.

We know that if $X$ is a mincut of a graph $G$, then $G\backslash X$, that is $G$ with all edges belonging to $X$ deleted from $E(G)$, is a disconnected graph with exactly two components, $A$ and $\bar{A}$. We will also refer to the vertex sets of the two components as $A$ and $\bar{A}$, respectively, and write $X=\langle A,\bar{A} \rangle$ to make clear that $X$ is the set of edges connecting the two components. For the following definition see \cite{chandran,lehel}.

\begin{definition}
	Let $X=\langle A,\bar{A} \rangle$ and $Y=\langle B,\bar{B} \rangle$ be two mincuts of a graph $G$. If the vertex sets of their corresponding components have non-empty intersection, that is $A\cap B\neq \emptyset$, then they are either \emph{nested}, i.e. $A\subset B$ or $B\subset A$, or they \emph{overlap} (also called \emph{crossing mincuts}), i.e. $A\cap B$, $\bar{A}\cap B$ and $A\cap \bar{B}$ are \emph{non-empty} and $A\not\subset B$ or $B\not\subset A$.
	\label{def:nestedandcross}
\end{definition}

We also note from \cite{lehel} that two mincuts can overlap only if $\lambda$, the cardinality of the minimum edge cut, is even.

\begin{proposition}[Lehel et al, \cite{lehel}]
	Let $X=\langle A,\bar{A} \rangle$ and $Y=\langle B,\bar{B} \rangle$ be two overlapping mincuts of a connected graph $G$. Then $|\langle \bar{A}\cap \bar{B}, A\cap \bar{B} \rangle |=|\langle \bar{A}\cap \bar{B}, \bar{A}\cap B \rangle |=|\langle A\cap B, A\cap \bar{B} \rangle |=|\langle A\cap B, \bar{A}\cap B \rangle |=\dfrac{\lambda}{2}$, where $\lambda$ is the minimum edge-cut number of $G$. Consequently $A\cup B, \, A\cap B, \, A\cap \bar{B}, \, \bar{A}\cap B$ are mincuts. Moreover, $|\langle \bar{A}\cap \bar{B}, A\cap B \rangle |=|\langle \bar{A}\cap B, A\cap \bar{B} \rangle |=0$.
	\label{prop:crossing}
\end{proposition}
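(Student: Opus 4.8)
The plan is to set up the four "quadrant" vertex sets $A\cap B$, $A\cap\bar B$, $\bar A\cap B$, $\bar A\cap\bar B$ and count edges by looking at the degree contributions across the boundaries. Since $X=\langle A,\bar A\rangle$ is a mincut, every edge leaving $A$ crosses into $\bar A$, and similarly for $Y$; the key observation is that each of the four quadrants is non-empty (this is precisely the overlap hypothesis) and that the edge set $X$ decomposes as the disjoint union $\langle A\cap B,\bar A\cap B\rangle\cup\langle A\cap B,\bar A\cap\bar B\rangle\cup\langle A\cap\bar B,\bar A\cap B\rangle\cup\langle A\cap\bar B,\bar A\cap\bar B\rangle$, with an analogous decomposition for $Y$. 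Writing $a,b,c,d$ for the cardinalities of these four "pure" cross-quadrant edge sets (in the cyclic order around the square) and $p,q$ for the two "diagonal" edge sets $\langle A\cap B,\bar A\cap\bar B\rangle$ and $\langle A\cap\bar B,\bar A\cap B\rangle$, I get $|X|=a+b+c+d$ wait — one must be careful: $X=\langle A,\bar A\rangle$ consists of edges with one end in $A\cap B$ or $A\cap\bar B$ and the other in $\bar A\cap B$ or $\bar A\cap\bar B$, so $|X|=$ (edges $A\cap B\to\bar A\cap B$) $+$ (edges $A\cap B\to\bar A\cap\bar B$) $+$ (edges $A\cap\bar B\to\bar A\cap B$) $+$ (edges $A\cap\bar B\to\bar A\cap\bar B$); I will label these $\alpha,\pi_1,\pi_2,\gamma$ and similarly $|Y|=$ (edges $A\cap B\to A\cap\bar B$) $+$ (edges $\bar A\cap B\to A\cap\bar B$) $+$ (edges $A\cap B\to\bar A\cap\bar B$) $+$ (edges $\bar A\cap B\to\bar A\cap\bar B$) $=\beta+\delta+\pi_1+\pi_2$, where $\pi_1,\pi_2$ are the diagonal edge sets $\langle A\cap B,\bar A\cap\bar B\rangle$, $\langle A\cap\bar B,\bar A\cap B\rangle$ shared between the two cuts.

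Next I would use the hypothesis $|X|=|Y|=\lambda$ together with the four derived cuts. The sets $\langle A\cap B,\ \overline{A\cap B}\rangle$ is an edge-cut (it separates the nonempty set $A\cap B$ from its nonempty complement, which is nonempty since all quadrants are), hence has size $\geq\lambda$; but its edges are exactly $\alpha\cup\beta\cup\pi_1$ (edges from $A\cap B$ to $\bar A\cap B$, to $A\cap\bar B$, and to $\bar A\cap\bar B$), so $\alpha+\beta+\pi_1\ge\lambda$. Doing the same for the other three quadrants gives $\gamma+\delta+\pi_1\ge\lambda$, $\alpha+\delta+\pi_2\ge\lambda$, $\beta+\gamma+\pi_2\ge\lambda$. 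Adding all four inequalities yields $2(\alpha+\beta+\gamma+\delta)+2(\pi_1+\pi_2)\ge4\lambda$, i.e. $(\alpha+\pi_1+\pi_2+\gamma)+(\beta+\delta+\pi_1+\pi_2)+(\alpha+\beta+\gamma+\delta)\ge 4\lambda$, and since the first two bracketed quantities are $|X|=\lambda$ and $|Y|=\lambda$, this forces $\alpha+\beta+\gamma+\delta\ge 2\lambda$; combined with $\alpha+\gamma\le|X|=\lambda$ and $\beta+\delta\le|Y|=\lambda$ this gives $\alpha+\gamma=\lambda$, hence $\pi_1=\pi_2=0$, and then $\beta+\delta=\lambda$ as well.

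With $\pi_1=\pi_2=0$ in hand, the four quadrant inequalities collapse to $\alpha+\beta\ge\lambda$, $\gamma+\delta\ge\lambda$, $\alpha+\delta\ge\lambda$, $\beta+\gamma\ge\lambda$, while $\alpha+\gamma=\beta+\delta=\lambda$. Adding $\alpha+\beta\ge\lambda$ and $\gamma+\delta\ge\lambda$ gives $\alpha+\beta+\gamma+\delta\ge2\lambda$ with equality, so both are equalities: $\alpha+\beta=\gamma+\delta=\lambda$, and likewise $\alpha+\delta=\beta+\gamma=\lambda$. From $\alpha+\beta=\lambda=\alpha+\gamma$ we get $\beta=\gamma$, and from $\alpha+\beta=\lambda=\beta+\gamma$ wait that also gives $\alpha=\gamma$; combining, $\alpha=\beta=\gamma=\delta=\lambda/2$. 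This is exactly the claimed equality of the four cut cardinalities, and $\pi_1=\pi_2=0$ is the "moreover" clause. Finally, the statement that $A\cup B$, $A\cap B$, $A\cap\bar B$, $\bar A\cap B$ are mincuts follows: $\langle A\cap B,\overline{A\cap B}\rangle$ has size $\alpha+\beta+\pi_1=\lambda/2+\lambda/2+0=\lambda$, so it is a mincut, and symmetrically for $A\cap\bar B$ and $\bar A\cap B$; for $A\cup B$, note $\overline{A\cup B}=\bar A\cap\bar B$ and $\langle\bar A\cap\bar B,\overline{\bar A\cap\bar B}\rangle$ has size $\gamma+\delta+\pi_2=\lambda$.

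I expect the main obstacle to be bookkeeping: correctly identifying which of the six inter-quadrant edge bundles lie in $X$, which lie in $Y$, which lie in neither, and then invoking the right edge-cut lower bound for each quadrant without double-counting the diagonal bundles. Once the six quantities $\alpha,\beta,\gamma,\delta,\pi_1,\pi_2$ and the four quadrant inequalities are written down cleanly, the arithmetic is forced. One should also double-check that each quadrant is genuinely non-empty under Definition~\ref{def:nestedandcross} so that the edge-cuts $\langle A\cap B,\overline{A\cap B}\rangle$ etc. are legitimate (both sides non-empty); this is where the overlap hypothesis, as opposed to mere non-nestedness, is used.
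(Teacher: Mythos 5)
The paper gives no proof of this proposition at all---it is quoted verbatim from Lehel et al.\ \cite{lehel} as a known result---so your write-up is necessarily doing more than the paper does. Your argument is the standard submodularity/counting proof and it is correct. The bookkeeping works out: with $\alpha,\beta,\gamma,\delta$ the four ``side'' bundles and $\pi_1,\pi_2$ the two diagonals, you correctly get $|X|=\alpha+\gamma+\pi_1+\pi_2=\lambda$, $|Y|=\beta+\delta+\pi_1+\pi_2=\lambda$, the four quadrant-cut lower bounds $\alpha+\beta+\pi_1\ge\lambda$, $\gamma+\delta+\pi_1\ge\lambda$, $\alpha+\delta+\pi_2\ge\lambda$, $\beta+\gamma+\pi_2\ge\lambda$, and summing forces $\pi_1=\pi_2=0$ and $\alpha=\beta=\gamma=\delta=\lambda/2$, which yields all three assertions at once. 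Two small remarks. First, in the final sentence the cut around $\bar A\cap\bar B$ is $\gamma+\delta+\pi_1$, not $\gamma+\delta+\pi_2$; harmless since both diagonals have already been shown to vanish, but worth fixing. Second, your closing caveat about non-emptiness of the quadrants is well placed: Definition \ref{def:nestedandcross} as written only demands that $A\cap B$, $\bar A\cap B$ and $A\cap\bar B$ be non-empty, and your argument (and indeed the conclusion that $A\cup B$ yields a mincut) also needs $\bar A\cap\bar B\neq\emptyset$; this is part of the standard definition of crossing cuts in \cite{lehel} and should be stated explicitly, since otherwise the quadrant cut $\langle A\cup B,\bar A\cap\bar B\rangle$ is not a legitimate edge-cut.
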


\begin{lemma}[Chandran \& Ram, \cite{chandran}]
	If $X=\langle A,\bar{A} \rangle$ and $Y=\langle B,\bar{B} \rangle$ are a pair of crossing mincuts, then $X\cap Y = \emptyset$.
	\label{lem:crossempty}
\end{lemma}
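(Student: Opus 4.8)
The plan is to pass to the common refinement of the two cuts and then classify edges. Write $V(G)$ as the disjoint union of the four cells
$A\cap B,\ A\cap\bar B,\ \bar A\cap B,\ \bar A\cap\bar B$, all of which are non-empty because $X$ and $Y$ overlap (and, as noted from \cite{lehel}, overlapping forces $\lambda$ to be even, which is exactly the situation covered by Proposition \ref{prop:crossing}). Every edge of $G$ that is relevant here joins two of these cells, so the whole argument reduces to tracking which cell-pairs the edges of $X$ and of $Y$ can possibly span.

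First I would classify the edges of $X=\langle A,\bar A\rangle$. Such an edge has one endpoint in $A=(A\cap B)\cup(A\cap\bar B)$ and the other in $\bar A=(\bar A\cap B)\cup(\bar A\cap\bar B)$, so it joins one of the four cell-pairs $(A\cap B,\bar A\cap B)$, $(A\cap B,\bar A\cap\bar B)$, $(A\cap\bar B,\bar A\cap B)$, $(A\cap\bar B,\bar A\cap\bar B)$. Symmetrically, an edge of $Y=\langle B,\bar B\rangle$ has one endpoint in $B$ and the other in $\bar B$, hence joins one of $(A\cap B,A\cap\bar B)$, $(A\cap B,\bar A\cap\bar B)$, $(\bar A\cap B,A\cap\bar B)$, $(\bar A\cap B,\bar A\cap\bar B)$.

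Next I would intersect these two lists: an edge lying in $X\cap Y$ must simultaneously be of a type in the first list and a type in the second, and the only cell-pairs common to both are the two ``diagonal'' ones, $(A\cap B,\bar A\cap\bar B)$ and $(A\cap\bar B,\bar A\cap B)$. Finally, I would invoke Proposition \ref{prop:crossing}, whose last assertion is precisely $|\langle \bar A\cap\bar B, A\cap B\rangle|=|\langle \bar A\cap B, A\cap\bar B\rangle|=0$: there are no edges across either diagonal. Therefore $X\cap Y$ contains no edge, i.e. $X\cap Y=\emptyset$.

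There is no serious obstacle here once Proposition \ref{prop:crossing} is in hand — that proposition does all the structural work, and the only thing to be careful about is the edge-type bookkeeping (making sure the common cell-pairs are exactly the two diagonals and nothing else). One could also give the argument self-containedly by re-deriving the zero-diagonal fact from a counting of the $\lambda$ edges of $X$ across the cut $\langle A,\bar A\rangle$ and comparing with submodularity of cuts, but citing Proposition \ref{prop:crossing} is the cleaner route.
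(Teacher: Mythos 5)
Your proof is correct. The paper itself gives no proof of this lemma---it is imported verbatim from Chandran and Ram \cite{chandran} as a cited result---so there is no in-paper argument to compare against; your derivation fills that gap cleanly. The bookkeeping is right: an edge of $X\cap Y$ must have one endpoint in $A$ and one in $\bar A$, and simultaneously one in $B$ and one in $\bar B$, which forces it to lie in one of the two diagonal classes $\langle A\cap B,\,\bar A\cap\bar B\rangle$ or $\langle A\cap\bar B,\,\bar A\cap B\rangle$, and the final clause of Proposition \ref{prop:crossing} says both of these are empty. Two tiny points worth tidying: the paper's Definition \ref{def:nestedandcross} identifies ``overlap'' with ``crossing,'' so your appeal to Proposition \ref{prop:crossing} (stated for overlapping mincuts) is legitimate, but you should say so explicitly; and your claim that all four cells are non-empty is slightly stronger than what that definition literally guarantees (it omits $\bar A\cap\bar B$), though this is harmless since your argument never uses non-emptiness of the cells---an empty cell only removes candidate edge types.
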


We illustrate the concepts of nested and crossing mincuts in the following two examples.

\begin{example}
	The graphs in Figure \ref{fig:egnested} are four copies of a simple connected graph $G$ with $\lambda=3$. We note that $G$ has four non-trivial nested mincuts. The diagrams in the figure highlight the four non-trivial mincuts $Z=\langle A,\bar{A} \rangle=\{e_3,\, e_6,\, e_7\},\, V=\langle B,\bar{B} \rangle=\{e_1,\, e_2,\, e_3\}, \, Y=\langle C,\bar{C} \rangle=\{e_1,\, e_4,\, e_5\}$ and $W=\langle D,\bar{D} \rangle=\{e_5,\, e_9,\, e_{10}\}$. The intersections $Z\cap V,\, V\cap Y$ and $Y\cap W$ are all non-empty.
	
	We note $A=\{b,\,c,\,d,\,e\}$, $B=\{a,\,b,\,c,\,d,\,e\}$, $C=\{a,\,b,\,c,\,d,\,e,\,g\}$ and $D=\{a,\,b,\,c,\,d,\,e,\,f,\,g\}$ and that the intersection of the vertex sets of the corresponding components of $Z,\, V,\, Y \textrm{ and } W$ are non-empty. We also have $\lambda$ odd which means that the vertex sets are nested, i.e. $A\subset B\subset C\subset D$.
	
	Equivalently, we consider the complementary vertex sets in each partition and we see that their intersections are non-empty and $\bar{D} \subset \bar{C} \subset \bar{B} \subset \bar{A}$.
	
	The subgraph induced by the four vertices corresponding to these non-trivial mincuts in $X(G)$ is a path $Z\rightarrow V\rightarrow Y\rightarrow W$.
	\label{eg:nested}
\end{example}

\begin{figure}[!h]
	\begin{center}
		\includegraphics{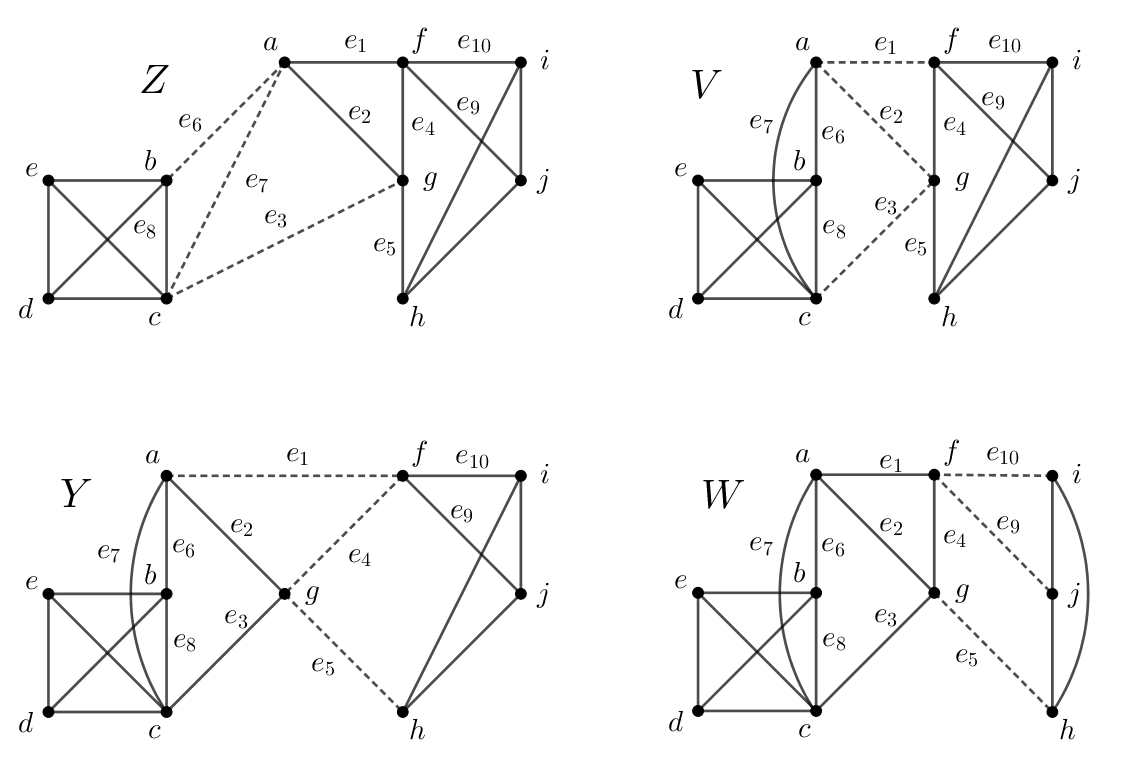}
	\end{center}
	\caption{Nested mincuts $Z,\, V,\, Y \textrm{ and } W$ of a graph $G$, where $\lambda$ is odd.}
	\label{fig:egnested}
\end{figure}

\begin{example}
	The graphs in Figure \ref{fig:egcrossing} are five copies of a simple connected graph $G$ with $\lambda=4$. We note that $G$ has five non-trivial mincuts. The diagrams in the figure highlight the five non-trivial mincuts $Z=\langle A,\bar{A} \rangle,\, U=\langle B,\bar{B} \rangle, \, Y=\langle C,\bar{C} \rangle$, $V=\langle B\cap C, \bar{B}\cup \bar{C} \rangle$ and $W=\langle B\cup C,\bar{B}\cap \bar{C} \rangle$. Consider $U$ and $Y$ and note that $B=\{a,\,b,\,c,\,d,\,k,\,l,\,m\}$, $\bar{B}=\{e,\,f,\,g,\,h,\,i,\,j\}$, $C=\{a,\,b,\,c,\,e,\,k,\,l,\,m\}$ and $\bar{C}=\{d,\,f,\,g,\,h,\,i,\,j\}$. Thus we have $B\cap C$, $\bar{B}\cap C$, $B\cap \bar{C}$ and $\bar{B}\cap \bar{C}$ all non-empty and by Definition \ref{def:nestedandcross}, $U$ and $Y$ are overlapping or crossing mincuts. Since $U$ and $Y$ overlap, Proposition \ref{prop:crossing} indicates that we should also have $\langle B\cap C,\bar{B}\cup \bar{C} \rangle$ and $\langle B\cup C,\bar{B}\cap \bar{C} \rangle$ as mincuts and these are, respectively, $V$ and $W$. Furthermore, $B\cap \bar{C}=\{d\}$, and $\bar{B}\cap C=\{e\}$ are both vertices of degree four and hence their incident edge sets are mincuts.
	
	By Lemma \ref{lem:crossempty}, if $U$ and $Y$ are crossing mincuts then we should have $U\cap Y=\emptyset$ and indeed we see that $U=\{e_3,\,e_4,\,e_5,\,e_6\}$ and $Y=\{e_1,\,e_2,\,e_7,\,e_8\}$. Mincut $Z$ is a nested mincut since $A\subset (B\cap C)$. We also have $A\subset B$ and $A\subset C$. Similarly $V$ is nested since $(B\cap C)\subset B$ and $(B\cap C)\subset C$. We also have $B\subset (B\cup C)$ and $C\subset (B\cup C)$ so $U$ and $Y$ are nested with respect to $W$, but $B\not\subset C$ and $C\not\subset B$ since $U$ and $Y$ overlap. Considering the edge sets we have $Z=\{e_1, \,e_4,\,e_9,\,e_{10}\}, \, V=\{e_1, \,e_2,\,e_3,\,e_4\}$ and $W=\{e_5, \,e_6,\,e_7,\,e_8\}$. Hence, the subgraph induced by the vertices in $X(G)$ corresponding to $V,\,U,\,Y,\,W$ is a cycle and the vertex corresponding to $Z$ is adjacent to the vertices $V,\,U,\,Y$.
	\label{eg:crossing}
\end{example}

\begin{figure}[!h]
	\begin{center}
		\includegraphics{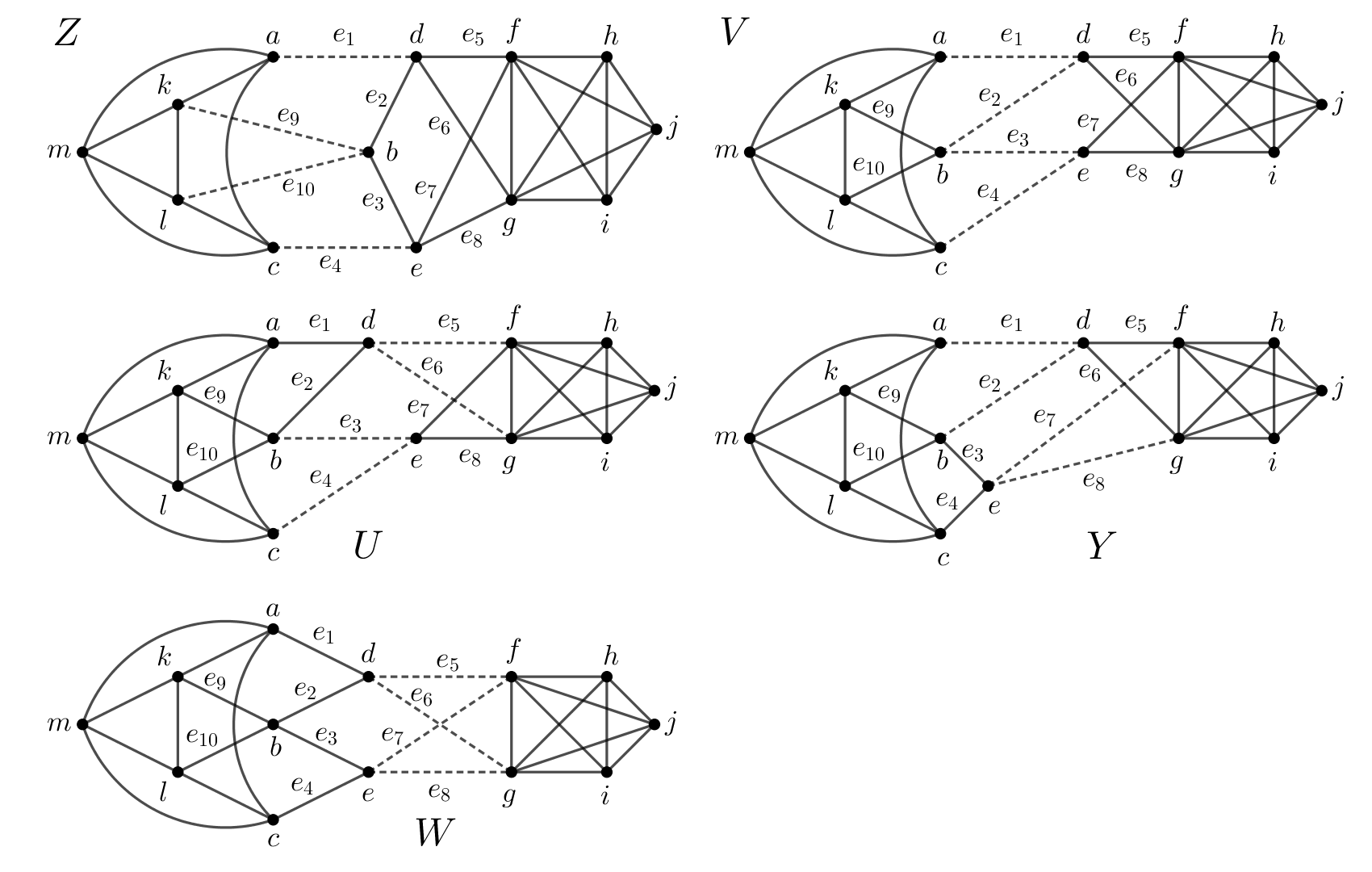}
	\end{center}
	\caption{Nested and overlapping mincuts $Z,\, V,\, U, \, Y \textrm{ and } W$ of a graph $G$, where $\lambda$ is even.}
	\label{fig:egcrossing}
\end{figure}

When we consider a mincut $X$ in terms of the vertex sets of the two components of $G\backslash X$, then there is an important subset of vertices of each of the components, namely the vertices incident on the edges of $X$. We define vertices of attachment as follows, specifically with regard to mincuts, although the definition can be generalised for any subgraph of $G$, as originally defined by Tutte, see \cite{tutconnectivity,tutte1984graph}.

\begin{definition}
	Let $X$ be a mincut of $G$ and $A$ and $\bar{A}$ the two components of $G\backslash X$. A \emph{vertex of attachment} of $A$ (resp. $\bar{A}$) in $G$ is a vertex of $A$ (resp. $\bar{A}$) that is incident with some edge of $G$ that is not an edge of $A$ (resp. $\bar{A}$).
	\label{def:vertatt}
\end{definition}

We will denote the vertices of attachment of the two components $A$ and $\bar{A}$ of $G\backslash X$ as $V_A$ and $V_{\bar{A}}$ respectively. Also, we will denote the neighbourhood of a vertex $v$ restricted to the neighbours in one of the two components as $N_A(v)$ or $N_{\bar{A}}(v)$ respectively.

\begin{example}
	Consider $Z=\langle A,\bar{A} \rangle=\{e_1,\,e_4,\,e_9,\,e_{10}\}$ in Figure \ref{fig:egcrossing}, then $V_A=\{a,\,c,\,k,\,l\}$ and $V_{\bar{A}}=\{b,\,d,\,e\}$.
	\label{eg:vertatt}
\end{example}

\begin{definition}
	Let $v\in V(G)$ be a vertex of a graph $G$ such that $deg(v)=\lambda$, that is, the set of edges incident on $v$ is a (trivial) mincut. We define $v$ to be a \emph{trivial vertex}.
	\label{def:trivvert}
\end{definition}

\begin{lemma}
	Let $X=\langle A, \,\bar{A}\rangle$ and $Y=\langle B,\, \bar{B}\rangle$ be two non-trivial mincuts of a graph $G$ with $A,\, \bar{A}$ and $B, \, \bar{B}$ the vertex sets of the components of $G\backslash X$ and $G\backslash Y$ respectively. If $X\cap Y \neq \emptyset$ then either $A\cap B\neq \emptyset$ or $A\cap \bar {B}\neq \emptyset$.
	\label{lem:XYcomp}
\end{lemma}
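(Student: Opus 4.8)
The plan is to exploit the fact that $A\cup\bar A$ and $B\cup\bar B$ are both partitions of $V(G)$, and to chase the endpoints of a common edge through these two partitions. Concretely, since $X\cap Y\neq\emptyset$, I would first fix an edge $e\in X\cap Y$. Because $X=\langle A,\bar A\rangle$ consists exactly of the edges of $G$ joining the two components $A$ and $\bar A$ of $G\backslash X$, the edge $e$ has precisely one endpoint in $A$, call it $u$, and its other endpoint in $\bar A$.

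Next I would turn attention to the second mincut. Since $G\backslash Y$ has the two components $B$ and $\bar B$, these form a partition of $V(G)$, so the vertex $u$ lies in exactly one of $B$ or $\bar B$. In the first case $u\in A\cap B$, whence $A\cap B\neq\emptyset$; in the second case $u\in A\cap\bar B$, whence $A\cap\bar B\neq\emptyset$. In either case the conclusion of the lemma holds, which finishes the argument.

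As for the main obstacle: there really is none of substance here — this is a bookkeeping lemma meant to set up the later structural analysis, and the only point requiring care is the observation that an edge of an edge-cut $\langle A,\bar A\rangle$ has exactly one endpoint on each side, which is immediate from $G\backslash X$ having $A$ and $\bar A$ as its (two, disjoint, spanning) components. I would remark in passing that the hypothesis that $X$ and $Y$ be non-trivial is not actually used in this particular step; it is carried along only because the lemma will be applied in that setting.
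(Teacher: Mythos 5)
Your proof is correct and follows essentially the same route as the paper's: fix a common edge $e\in X\cap Y$, note that one endpoint lies in $A$, and observe that this endpoint must fall into $B$ or $\bar B$ since these partition $V(G)$. Your version is in fact slightly cleaner than the paper's (which phrases the same endpoint-chasing in terms of vertices of attachment), and your remark that non-triviality is not used is accurate.
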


\begin{proof}
	Let $e=uv\in X\cap Y$ be an edge belonging to both $X$ and $Y$, then $u$ and $v$ are vertices of attachment, one of $A$ and the other of $\bar{A}$, since $e\in X$. Now $e\in Y$ and hence $u$ and $v$ are also vertices of attachment of $B$ and $\bar{B}$. Therefore we must have $A\cap B \neq \emptyset$ or $A\cap \bar{B} \neq \emptyset$.
\end{proof}

Note that the converse to Lemma \ref{lem:XYcomp} is not necessarily true.

Recall that two graphs, $G$ and $X(G)$, are isomorphic if there is a bijective function $\psi : V(G) \to V(X(G))$ such that two vertices $v_i$ and $v_j$ are adjacent in $G$ if and only if $\psi(v_i)=x_i$ and $\psi(v_j)=x_j$ are adjacent in $X(G)$.

The $X$-operator maps a mincut $X_i\subseteq E(G)$ to a vertex $x_i\in V(X(G))$. We will say that $x_i$ corresponds to $X_i$. But if $G\cong X(G)$ then there is a vertex $v_i$ mapped to $x_i$ and thus we can say that this $v_i$ corresponds to $X_i$ and to $x_i$.

In the rest of this section we will use $v$ for vertices in $G$, $X$ for mincuts in $G$ and $x$ for vertices in $X(G)$ and use the same subscript if two objects correspond. For example, we will write $v_i\sim x_i$ or $v_i \sim X_i$. 

The key behind the isomorphism mapping $G\to X(G)$ is that if $v_j\sim x_j$ and $v_j$ is adjacent to $v_i$ in $G$, then $x_j\in N_{X(G)}(x_i)$ and $X_j\cap X_i\neq \emptyset$.

\begin{lemma}
	Let $v_i,\, v_j\in V(G)$ be trivial vertices with incident edge sets $X_i$ and $X_j$. If $v_iv_j \in E(G)$, then $x_i\sim X_i$ and $x_j\sim X_j$ are vertices of $X(G)$ such that $x_ix_j\in E(X(G))$. If $v_iv_j\notin E(G)$ then $x_ix_j\notin E(X(G))$.
	\label{lem:trivadjtriv}
\end{lemma}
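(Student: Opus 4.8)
The plan is to prove the two implications separately, in each case by working directly with the description of $X_i$ and $X_j$ supplied by Definition \ref{def:trivvert}: since $v_i$ and $v_j$ are trivial vertices, $X_i$ is exactly the set of edges incident with $v_i$ and $X_j$ is exactly the set of edges incident with $v_j$, each of cardinality $\lambda$. First I would record that $X_i$ and $X_j$ are genuine mincuts — each isolates a single vertex and has cardinality $\lambda(G)$ — so that the vertices $x_i, x_j$ of $X(G)$ corresponding to them are well defined, and (for $G\ncong K_2$) distinct, so that $x_ix_j$ is a legitimate potential edge of the simple graph $X(G)$.

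For the first implication, suppose $v_iv_j\in E(G)$. The edge $v_iv_j$ is incident with $v_i$, hence belongs to $X_i$, and is incident with $v_j$, hence belongs to $X_j$; therefore $X_i\cap X_j\neq\emptyset$, and by Definition \ref{def:XG} the vertices $x_i$ and $x_j$ are adjacent in $X(G)$.

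For the second implication I would argue by contraposition. Assume $x_ix_j\in E(X(G))$; then $X_i\cap X_j\neq\emptyset$, so there is an edge $e=uw$ lying in both $X_i$ and $X_j$. Since $e\in X_i$ it is incident with $v_i$, so $v_i\in\{u,w\}$; since $e\in X_j$ it is incident with $v_j$, so $v_j\in\{u,w\}$. As $v_i\neq v_j$ and $G$ is simple (so $e$ has two distinct endpoints and there are no parallel edges to worry about), this forces $\{u,w\}=\{v_i,v_j\}$, i.e. $e=v_iv_j$; in particular $v_iv_j\in E(G)$. Contrapositively, $v_iv_j\notin E(G)$ implies $X_i\cap X_j=\emptyset$, hence $x_ix_j\notin E(X(G))$.

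I do not expect a substantive obstacle here; the one point that genuinely needs care is that the argument must invoke triviality rather than treat $X_i$ as an abstract mincut — two stars in $G$ intersect precisely when their centres are adjacent, whereas arbitrary overlapping mincuts (cf. Proposition \ref{prop:crossing} and Lemma \ref{lem:crossempty}) carry no such simple structural constraint. The only true edge case is $G\cong K_2$, where the two endpoints share the single edge as their common and only mincut so that $x_i=x_j$; this is excluded by the same convention that excludes $K_2$ in Proposition \ref{prop:X(G)=G}.
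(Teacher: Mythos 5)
Your proof is correct and follows essentially the same route as the paper: both arguments rest on the observation that a trivial mincut is exactly the star of its vertex, so an edge lies in $X_i\cap X_j$ precisely when it joins $v_i$ to $v_j$. Your second implication is merely the contrapositive of the paper's direct argument, and your extra remarks (distinctness of $x_i,x_j$, the $K_2$ edge case) are sensible but not a substantive departure.
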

\begin{proof}
	$X_i$ and $X_j$ are the edge sets incident on $v_i$ and $v_j$ respectively and are mincuts since the two vertices are both trivial. Let $e$ be an edge incident on both $v_i$ and $v_j$ then $v_iv_j\in E(G)$ and $e$ is an edge belonging to both $X_i$ and $X_j$, so $X_i\cap X_j\neq \emptyset$ and $x_ix_j\in E(X(G))$.
	Suppose $v_iv_j\notin E(G)$ and $e$ is any edge incident on $v_i$, then $e$ is not incident  on $v_j$ and hence $e\notin X_j$. Therefore, $X_i\cap X_j=\emptyset$ and $x_ix_j\notin E(X(G))$.
\end{proof}

\begin{lemma}
	Let $X_k$ be a non-trivial mincut with $A$ and $\bar{A}$ the components of $G\backslash X_k$ and $x_k\sim X_k$ where $x_k\in V(X(G))$. If $v_i$ is a trivial vertex and $v_i\in V_A$ (or $V_{\bar{A}}$), then $X_i\cap X_k \neq \emptyset$ and $x_ix_k\in E(X(G))$. If $v_i \notin V_A$ or $v_i \notin V_{\bar{A}}$, then $X_i\cap X_k=\emptyset$ and $x_ix_k\notin E(X(G))$.
	\label{lem:trivintnontriv}
\end{lemma}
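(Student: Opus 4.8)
The plan is to argue directly from the definitions, splitting into the two cases of the statement. Throughout, recall that since $v_i$ is a trivial vertex, $X_i$ is precisely the set of edges of $G$ incident on $v_i$; that $\{A,\bar{A}\}$ is a partition of $V(G)$, so $v_i$ lies in exactly one of the two components; and that the edges of $X_k=\langle A,\bar{A} \rangle$ are exactly the edges of $G$ with one endpoint in $A$ and the other in $\bar{A}$ (these being the only edges joining the two components of $G\backslash X_k$). I would also note at the outset that $X_i\neq X_k$, since $X_i$ is a trivial cut and $X_k$ is not, so $x_i$ and $x_k$ are genuinely distinct vertices of $X(G)$ and the question of their adjacency is well posed.

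First I would handle the case $v_i\in V_A$ (the case $v_i\in V_{\bar{A}}$ being symmetric). By Definition \ref{def:vertatt}, $v_i$ is incident with some edge $e$ of $G$ that is not an edge of $A$. Since $v_i\in A$, the edge $e$ has one endpoint, namely $v_i$, in $A$, and as $e$ does not lie inside $A$ its other endpoint must lie in $\bar{A}$; hence $e\in \langle A,\bar{A} \rangle=X_k$. On the other hand $e$ is incident on $v_i$, so $e\in X_i$. Therefore $e\in X_i\cap X_k$, so $X_i\cap X_k\neq\emptyset$, and by Definition \ref{def:XG} the corresponding vertices are adjacent, i.e. $x_ix_k\in E(X(G))$.

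Next I would treat the case in which $v_i$ is not a vertex of attachment of the component containing it, that is, $v_i\notin V_A\cup V_{\bar{A}}$; say $v_i\in A$, the other case again being symmetric. Negating Definition \ref{def:vertatt}, every edge of $G$ incident on $v_i$ is an edge of $A$; in particular no such edge belongs to $X_k$, because the edges of $X_k$ join $A$ to $\bar{A}$ and so are not edges of $A$. But $X_i$ consists exactly of the edges incident on $v_i$, so $X_i\cap X_k=\emptyset$, and hence $x_i$ and $x_k$ are non-adjacent in $X(G)$, i.e. $x_ix_k\notin E(X(G))$.

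I do not expect any real obstacle here: the argument is more elementary than Lemma \ref{lem:XYcomp} precisely because one of the two cuts involved is trivial, so it suffices to track a single edge incident on $v_i$. The only points requiring a little care are the observation that $\{A,\bar{A}\}$ partitions $V(G)$, which legitimises both the case split and the ``without loss of generality'' reductions, and the characterisation of $X_k$ as the set of edges with exactly one endpoint in $A$; both are immediate consequences of $A$ and $\bar{A}$ being the two components of $G\backslash X_k$ together with the notation $X_k=\langle A,\bar{A} \rangle$ fixed earlier.
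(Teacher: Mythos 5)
Your proof is correct and follows essentially the same route as the paper's: in the attachment case you exhibit an edge incident on $v_i$ that crosses from $A$ to $\bar{A}$ and hence lies in both $X_i$ and $X_k$, and in the non-attachment case you observe that every edge incident on $v_i$ stays inside its component and so misses $X_k$. Your added remarks (that $X_i\neq X_k$ since one cut is trivial and the other is not, and the correct reading of the second hypothesis as $v_i\notin V_A\cup V_{\bar{A}}$) only make the argument slightly more careful than the paper's.
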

\begin{proof}
	If $v_i\in V_A$ then $v_iv_j\in E(G)$ for some $v_j\in V_{\bar{A}}$ which means there is an edge $e\in X_k$ incident on $v_i$ and $v_j$. But every edge incident on $v_i$ is in $X_i$ and $X_i$ is a mincut. So $X_i\cap X_k\neq \emptyset$ and $x_ix_k\in E(X(G))$.
	Let $v_i\in A$ and $v_i\notin V_A$. Then $v_i$ is adjacent to no $v_j\in V_{\bar{A}}$ and hence there is no $e\in X_i$ such that $e\in X_k$. Hence $X_i\cap X_k=\emptyset$ and $x_ix_k\notin E(X(G))$.
\end{proof}

\begin{example}
	The diagram in Figure \ref{fig:egcrossing} shows the non-trivial mincut $Z=\langle A,\bar{A}\rangle=\{e_1, \,e_4,\,e_9,\,e_{10}\}$. Vertex $a\in V_A$ is trivial since $\lambda(G)=deg(a)=4$ and we see that $X_a$ shares edge $e_1$ with $Z$. However, the trivial vertex $m\notin V_A$ and we note that $X_m$ shares no edge with $Z$.
	\label{eg:trivintnontriv}
\end{example}

\begin{lemma}
	Let $G\cong X(G)$ and $X_k$ be a non-trivial mincut. Then there is $v_k\in V(G)$ such that $deg(v_k)>\lambda$.
	\label{lem:nontrivexists}
\end{lemma}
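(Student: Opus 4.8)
The plan is to argue by contradiction with a vertex count. Suppose $G\cong X(G)$, that $X_k$ is a non-trivial mincut, but that no vertex of $G$ has degree greater than $\lambda$. In any connected graph $\lambda(G)\le\delta(G)$, so every vertex has degree at least $\lambda$; combined with the assumption, this forces $\deg(v)=\lambda$ for every $v\in V(G)$, i.e. $G$ is $\lambda$-regular and, by Definition \ref{def:trivvert}, every vertex of $G$ is a trivial vertex.

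Next I would manufacture many distinct mincuts. For each $v\in V(G)$ let $X_v$ be the set of edges incident on $v$; then $|X_v|=\deg(v)=\lambda$ and $G\backslash X_v$ is disconnected, since it isolates $v$, so $X_v$ is a mincut, and it is trivial. I claim the $n:=|V(G)|$ mincuts $X_v$ are pairwise distinct. Because $X_k$ is non-trivial, both components of $G\backslash X_k$ contain at least two vertices, so $n\ge 4$, and then a connected $\lambda$-regular graph on $n$ vertices has $\lambda\ge 2$. Hence for $v_i\ne v_j$ the vertex $v_i$ has an incident edge $v_iw$ with $w\ne v_j$ (simplicity allows at most one $v_iv_j$-edge), and $v_iw\notin X_{v_j}$, so $X_{v_i}\ne X_{v_j}$. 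Finally, $X_k$ equals no $X_v$, since a trivial cut isolates a single vertex whereas $X_k$ does not.

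Collecting these observations, $G$ has at least $n+1$ distinct mincuts, so by Definition \ref{def:XG} we obtain $|V(X(G))|\ge n+1>n=|V(G)|$, contradicting $G\cong X(G)$. Therefore the regular case is impossible, and since all degrees are at least $\lambda$, some $v_k\in V(G)$ must satisfy $\deg(v_k)>\lambda$.

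I do not expect a genuine obstacle here; the only points that need a little care are the pairwise distinctness of the $n$ trivial mincuts, handled via simplicity together with $\lambda\ge 2$, and the remark that the mere existence of a non-trivial mincut already forces $n\ge 4$, so degenerate small cases such as $K_2$ never intrude. An equivalent and perhaps cleaner way to get distinctness is to note that $v$ is the unique vertex lying on the singleton side of $G\backslash X_v$ (one first checks no trivial vertex is a cut vertex, else removing the edges from $v$ to one side of $G-v$ would be an edge-cut of size $<\lambda$), so $v$ is recovered from $X_v$ and the map $v\mapsto X_v$ is injective.
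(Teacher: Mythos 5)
Your proof is correct and follows essentially the same route as the paper: assume all degrees equal $\lambda$, count the $|V(G)|$ trivial mincuts plus the non-trivial $X_k$ to get more than $|V(G)|$ vertices in $X(G)$, contradicting $G\cong X(G)$. The only difference is that you carefully verify the pairwise distinctness of the trivial mincuts and of $X_k$ from them, a point the paper's proof takes for granted.
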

\begin{proof}
	Suppose there is no such $v_k$, then all vertices of $G$ have degree $\lambda$ and their incident edge-sets are mincuts. Since $X_k$ is a non-trivial mincut this gives $|V(G)|+1$ mincuts in $G$ and hence $|V(X(G))|>|V(G)|$ so $G\ncong X(G)$.
\end{proof}

	
	

\begin{lemma}
	Let $X_k=\langle A,\bar{A} \rangle$ be a non-trivial mincut of a connected graph $G$ with edge connectivity $\lambda$. If either $|V_A|\textrm{ or }|V_{\bar{A}}|=1$ then $G\ncong X(G)$.
	\label{lem:cutvertex}
\end{lemma}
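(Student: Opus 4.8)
The plan is to argue by contradiction: assume $G\cong X(G)$, and, by the symmetry of the hypothesis, that $|V_A|=1$, say $V_A=\{v\}$. First I would record the elementary consequences. Since $X_k$ is non-trivial, both $A$ and $\bar A$ have at least two vertices, so $G[A]$ (a component of $G\backslash X_k$) is connected on $\ge 2$ vertices and hence $v$ has a neighbour inside $A$; the edge joining them is not in $X_k$, so $\deg(v)\ge\lambda+1$. Because $V_A=\{v\}$, every edge of $X_k$ is incident with $v$, the set $A\setminus\{v\}$ is non-empty, and its only edges leaving it run to $v$; thus $\langle A\setminus\{v\},\,\bar A\cup\{v\}\rangle$ is a genuine edge cut of size $\deg(v)-\lambda$, forcing $\deg(v)\ge 2\lambda$ and, in passing, $|A\setminus\{v\}|\ge\lambda$. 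In particular $v$ is a cut vertex of $G$ whose removal separates $\bar A$ from $A\setminus\{v\}$.

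The key structural step is the claim that every mincut of $G$ lies entirely inside $E_1:=E(G[A])$ or entirely inside $E_2:=E(G)\setminus E_1=E\bigl(G[\bar A\cup\{v\}]\bigr)$. To see this, let $Y=\langle B,\bar B\rangle$ be a mincut with $v\in B$; then $G[\bar B]$ is connected and avoids the cut vertex $v$, so $\bar B$ is contained in a single component of $G-v$, i.e. inside $\bar A$ or inside $A\setminus\{v\}$. In the first case every edge of $Y$ has an endpoint in $\bar A$, so $Y\subseteq E_2$; in the second every edge of $Y$ has both endpoints in $A$, so $Y\subseteq E_1$. Since $E_1\cap E_2=\emptyset$, two mincuts of opposite type are disjoint as edge sets, hence the corresponding vertices of $X(G)$ are non-adjacent; consequently the vertices of $X(G)$ arising from $E_1$-type mincuts and those arising from $E_2$-type mincuts partition $V(X(G))$ with no edge across the partition.

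Now $X_k\subseteq E_2$, so $E_2$-type mincuts exist. If some mincut is also of $E_1$-type, then $X(G)$ is disconnected, contradicting the connectedness of $G\cong X(G)$. Hence every mincut of $G$ lies in $\bar G:=G[\bar A\cup\{v\}]$; one checks easily that $\lambda(\bar G)=\lambda$ and that the mincuts of $\bar G$ are exactly those of $G$, so $X(G)\cong X(\bar G)$. But inside $\bar G$ the vertex $v$ has degree exactly $\lambda$, so it is a trivial vertex of $\bar G$, while $\bar G$ has strictly fewer vertices than $G$ (it omits the non-empty set $A\setminus\{v\}$, which has at least $\lambda\ge 1$ vertices). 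It then remains to contradict $G\cong X(G)\cong X(\bar G)$ in this reduced situation: I would exploit the mismatch $|V(\bar G)|<|V(G)|=|V(X(\bar G))|$ together with $\deg_G(v)\ge 2\lambda$, noting that $v$ must be the $\psi$-image of some size-$\lambda$ mincut of $\bar G$ whose neighbourhood in $X(G)$ then has size $\ge 2\lambda$, and deriving a contradiction either from that neighbourhood count (using that $\bar G$ has the trivial vertex $v$) or from the impossibility of matching the extra vertices $A\setminus\{v\}$ to mincuts.

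I expect the partition argument of the second and third paragraphs to be routine, and the genuine obstacle to be exactly this last case: when all mincuts concentrate on the $\bar A$-side of the cut vertex $v$, the disconnectedness argument fails and one must analyse the strictly smaller graph $\bar G$, in which $X_k$ has degenerated to a trivial cut. Making that step airtight — showing $G\cong X(\bar G)$ is impossible given that $\bar G$ is smaller than $G$ and $v$ is trivial in $\bar G$ — is where the care is needed; the cleanest route seems to be to combine the vertex-count inequality with a degree/neighbourhood count at $x_k$ or at $\psi(v)$ in $X(G)$.
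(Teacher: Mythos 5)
Your structural reduction is correct, and in places sharper than the paper's own argument: the observations that $\deg(v)\ge 2\lambda$, that $v$ is a cut vertex, and the clean partition of all mincuts into those contained in $E(G[A])$ and those contained in $E(G[\bar A\cup\{v\}])$ (using that $G[\bar B]$ is connected and must avoid the cut vertex) rigorously justify the disconnection of $X(G)$ whenever mincuts of both types exist — the paper only asserts this loosely via ``no $X_i\in E(G[A])$ can intersect $X_k$.'' The problem is the terminal case, which you explicitly leave open: when \emph{no} mincut of $G$ lies inside $G[A]$, so that $X(G)\cong X(\bar G)$ with $\bar G=G[\bar A\cup\{v\}]$ strictly smaller than $G$. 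This is not a loose end to be tidied; it is the substantive half of the lemma (it is precisely the case the paper's own third paragraph must address), and your proposal does not prove it — it lists two candidate strategies and carries out neither.

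Neither suggested strategy closes the gap as stated. The vertex-count mismatch $|V(X(\bar G))|=|V(G)|>|V(\bar G)|$ is not by itself contradictory: a graph can have far more mincuts than vertices (a cycle $C_n$ has $\binom{n}{2}$ of them), so $\bar G$ carrying $|V(G)|$ mincuts is not absurd on its face. Likewise, the degree count at $\psi(v)$ only says that the mincut $Y$ with $\psi(v)\sim Y$ meets at least $2\lambda$ other mincuts; since a single edge can lie in many mincuts (again, $C_n$), $|Y|=\lambda$ does not bound $\deg_{X(G)}(\psi(v))$ by anything near $2\lambda$. To finish one needs an actual structural argument in $\bar G$ — for instance, an analysis of which mincuts of $\bar G$ can meet the now-trivial cut $X_k$ (the star of $v$ in $\bar G$) and how the $\lambda$-or-more vertices of $A\setminus\{v\}$ could be matched under $\psi$ to mincuts, in the spirit of the nesting/descent argument used in Cases V--VI of Theorem \ref{th:Xfixed}. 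Until that case is argued, the lemma is not proved.
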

\begin{proof}
	Let $v_k\in A$ be the only vertex of attachment of $A$. Since $X_k$ is a mincut we must have $|N_{\bar{A}}v_k|=\lambda$. If $|N_A(v_k)|=\emptyset$ then $X_k$ is a trivial cut. Therefore, we must have some $v_l\in N_A(v_k)$ and $v_k$ is non-trivial. But $|N_A(v_k)|\geq \lambda$, otherwise we have an edge set with cardinality less than $\lambda$ that will disconnect $v_k$ from $A$, contradicting the minimality of $\lambda$.
	
	If $|N_A(v_k)|=\lambda$, then there exists a mincut, $X_l=\langle B,\bar{B} \rangle$ say, such that $v_k\in \bar{B}$ and $|V_{\bar{B}}|=1$. Hence, $X_l\cap X_k=\emptyset$ and $X(G)$ will be disconnected. In fact, no $X_i\in E(G[A])$, the edge set of the induced subgraph on $A$, can intersect $X_k$ and $X(G)$ will be disconnected.
	
	Suppose there is no mincut, trivial or otherwise, of $G$ in the subgraph induced by $A$, $G[A]$. Then $X_k$ is mapped to $x_k\in V(X(G))$ but none of the other vertices in $N_A(v_k)$ are mapped to a corresponding component in $X(G)$ and $X(G)\ncong G$.
\end{proof}

\begin{theorem}
	$G\cong X(G)$ if and only if $G$ is super-$\lambda$ and $r$-regular.
	\label{th:Xfixed}
\end{theorem}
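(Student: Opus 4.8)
This statement combines an easy implication and a hard one. The implication ``$G$ super-$\lambda$ and $r$-regular $\Rightarrow G\cong X(G)$'' is exactly Proposition~\ref{prop:X(G)=G} (with the proviso noted there that $G\ncong K_2$, since $X(K_2)\cong K_1$), so all the work is in the converse. I would prove the converse by contraposition: assuming $G$ is not both super-$\lambda$ and regular, exhibit a mismatch between $G$ and $X(G)$. There are two ways the hypothesis can fail, giving two cases.

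\emph{Case 1: $G$ is super-$\lambda$ but not regular.} By Definition~\ref{def:superlambda} every mincut of $G$ is trivial, so the mincuts of $G$ are in bijection with the vertices of minimum degree $\delta=\lambda$ (for $|V(G)|\ge 3$, distinct vertices have distinct incident edge sets). Since $G$ is not regular this set is a proper subset of $V(G)$, whence $|V(X(G))|<|V(G)|$ and $G\ncong X(G)$. This case is routine.

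\emph{Case 2: $G$ is not super-$\lambda$.} Then $G$ has a non-trivial mincut. If some non-trivial mincut $\langle A,\bar A\rangle$ has $|V_A|=1$ or $|V_{\bar A}|=1$, Lemma~\ref{lem:cutvertex} already gives $G\ncong X(G)$; so assume every non-trivial mincut has at least two vertices of attachment on each side. Pick a non-trivial mincut $X_k=\langle A,\bar A\rangle$ for which the smaller side $A$ has minimum cardinality among the components of all non-trivial mincuts; then $|A|\ge 2$ and $G[A]$ is connected. Assuming $G\cong X(G)$, let $v_k$ correspond to $X_k$, so $\deg_G(v_k)=\deg_{X(G)}(x_k)$ is the number of mincuts meeting $X_k$. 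Since crossing mincuts are edge-disjoint (Lemma~\ref{lem:crossempty}) and overlapping families have the rigid $\lambda/2$ structure of Proposition~\ref{prop:crossing}, every mincut meeting $X_k$ is \emph{nested} with it; combined with the minimality of $|A|$, such a mincut is either a trivial cut at a vertex of $V_A\cup V_{\bar A}$ or a non-trivial mincut enclosing $X_k$ on one side. I would then combine two counts: on one hand Lemmas~\ref{lem:trivadjtriv} and~\ref{lem:trivintnontriv} show that the vertices of $X(G)$ coming from trivial cuts induce exactly the subgraph of $G$ on its degree-$\lambda$ vertices, while each non-trivial mincut contributes a genuinely ``extra'' vertex; on the other hand Lemma~\ref{lem:nontrivexists} forces a vertex of degree $>\lambda$. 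Balancing $|V(X(G))|=|V(G)|$ against the adjacencies forced at $x_k$ then yields the contradiction, completing the converse.

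The delicate point is the reduced sub-case of Case~2 where every non-trivial mincut has $\ge 2$ attachment vertices on both sides: a blunt vertex count is not enough there, since a priori $X(G)$ could have exactly $|V(G)|$ vertices. The argument must exploit the fine structure of the mincut family — which mincuts can simultaneously meet a fixed non-trivial mincut, how nested and crossing subfamilies interact (Proposition~\ref{prop:crossing}, Lemma~\ref{lem:crossempty}), and what these constraints force $\deg_G(v_k)$ to be — to extract an inconsistency. The remaining ingredients (the ``if'' direction, Case~1, and the invocations of Lemmas~\ref{lem:cutvertex} and~\ref{lem:nontrivexists}) are essentially immediate.
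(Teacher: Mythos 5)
Your overall architecture matches the paper's: the ``if'' direction is Proposition \ref{prop:X(G)=G}, the converse is proved by contraposition, and the split into ``super-$\lambda$ but not regular'' versus ``not super-$\lambda$'' is exactly the paper's. Your Case 1 is complete and correct. The problem is Case 2, which is where the entire content of the theorem lives, and there your write-up stops at precisely the point where a proof is required. You assemble the right ingredients (Lemma \ref{lem:cutvertex} to exclude a single attachment vertex, Lemma \ref{lem:crossempty} to reduce to nested mincuts, the correspondence $v_k\sim X_k$ forcing $\deg_G(v_k)$ to equal the number of mincuts meeting $X_k$), but the sentence ``Balancing $|V(X(G))|=|V(G)|$ against the adjacencies forced at $x_k$ then yields the contradiction'' is an assertion, not an argument --- and your closing paragraph concedes that the decisive inconsistency is still to be extracted. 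A bare vertex count cannot close this case: a priori the number of non-trivial mincuts could equal the number of vertices of degree greater than $\lambda$, so $|V(X(G))|=|V(G)|$ is perfectly consistent with $G$ failing to be super-$\lambda$.

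The missing idea, which is the engine of the paper's proof, is an edge count \emph{inside} $X_k$. Under the assumed isomorphism every neighbour of $v_k$ corresponds to a mincut meeting $X_k$; a trivial neighbour can meet $X_k$ only if it is a vertex of attachment (Lemma \ref{lem:trivintnontriv}), and every neighbour of $v_k$ lying in $V_A\cup V_{\bar A}$ contributes at least one edge of $X_k$, so if all neighbours of $v_k$ were attachment vertices one would get $\lambda=|X_k|\geq\deg(v_k)>\lambda$, a contradiction. Hence $v_k$ must have a non-trivial neighbour $v_l\in A\setminus V_A$, whose mincut is non-trivial, meets $X_k$, is therefore nested with it by Lemma \ref{lem:crossempty}, and has a component strictly inside $A$; the paper iterates this descent until it terminates in a component all of whose vertices are attachment vertices and re-runs the same edge count for the final contradiction. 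Your choice of a non-trivial mincut whose smaller side $A$ has minimum cardinality is actually a nice way to short-circuit that descent --- the nested mincut produced by the edge count would immediately violate minimality (modulo the sub-case where the new cut contains $A$ rather than being contained in it, which needs a separate word) --- but without the edge count itself your proof has no mechanism for producing that smaller mincut, so the contradiction is never reached. As written, the hard direction is not proved.
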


\begin{proof}
	By Proposition \ref{prop:X(G)=G}, if $G$ is super-$\lambda$ and $r$-regular then $G\cong X(G)$.
	
	To prove necessity we need to show that if $G\cong X(G)$ then $G$ is super-$\lambda$ and $r$-regular. We will prove the contrapositive, that is, if $G$ does not have the property of being \emph{both} super-$\lambda$ and $r$-regular then $G\ncong X(G)$. Hence, we need to prove that if $G$ is not super-$\lambda$ or not $r$-regular then $G\ncong X(G)$.
	
	Suppose $G$ is super-$\lambda$ but not $r$-regular.
	
	If $G$ is super-$\lambda$ then all mincuts are trivial, that is only edge sets incident on trivial vertices are mincuts. But $G$ is not $r$-regular and we conclude that there must be vertices $v\in V(G)$ such that $deg(v)>\lambda$. Hence, we have $|X|<|V(G)|$, the number of mincuts is less than the number of vertices and consequently $|V(X(G))|<|V(G)|$, so $G\ncong X(G)$.
	
	It remains to show that if $G$ is not super-$\lambda$ but $r$-regular or $G$ is not super-$\lambda$ and not $r$-regular, then $G\ncong X(G)$. We show that $G$ not being super-$\lambda$ is sufficient for $G\ncong X(G)$.
	
	Suppose $G\cong X(G)$ and $G$ is not super-$\lambda$.
	
	If $G$ is not super-$\lambda$ then not all mincuts are trivial. Let $X_k$ be a non-trivial mincut with $A$ and $\bar{A}$ the two components of $G\backslash X_k$ and $V_A$ and $V_{\bar{A}}$ their respective sets of vertices of attachment. By Lemma \ref{lem:nontrivexists} there exists a non-trivial vertex $v_k\in V(G)$ such that $v_k\sim X_k$. We have a number of cases of how the position of some vertex $v$ and $N(v_k)$ relate to the sets of vertices of attachment corresponding to $X_k$. We start with the supposition that $\lambda=\delta$ and \emph{all} the neighbours of $v_k$ are trivial.
	
	\begin{enumerate}[label=Case \Roman*:, wide, labelwidth=!, labelindent=0pt]
		\item \noindent Suppose $v_k\in V_A$ and we have $v_i\in N_A(v_k)$, but $v_i \notin V_A$. Then, by Lemma \ref{lem:trivintnontriv} $X_i\cap X_k= \emptyset$ and $x_i \notin N(x_k)$ in $X(G)$. The same holds if $v_k\notin V_A$ and $v_i\notin V_A$. We conclude that any $v_i\in N_A(v_k)$ must be in $V_A$.
		\item Suppose $v_k\notin V_A$ is adjacent to $v_i\in V_A$ and $v_iv_j\in E(G)$ where $v_j\in V_{\bar{A}}$ is trivial. Then $v_kv_j\notin E(G)$, but $X_k\cap X_j\neq \emptyset$ and hence $x_kx_j\in E(X(G))$.
	\end{enumerate}
	
	We conclude that we must have $v_k\in V_A$ and all trivial vertices $v\in N_A(v_k)$ must be in $V_A$. Furthermore, this implies that all other neighbours of $v_k$ are in $V_{\bar{A}}$. 
	
	\begin{enumerate}[label=Case \Roman*:, wide, labelwidth=!, labelindent=0pt, resume]
		\item Suppose $v_k\in V_A$, and all $v\in N(v_k)$ are trivial and are either in $V_A$ or $V_{\bar{A}}$. All $v_i\in V_A,\,V_{\bar{A}}$ are necessarily neighbours of $v_k$ since their mincuts intersect $X_k$ and hence their corresponding vertices $x_i$ are neighbours of $x_k$.
		
		$X_k$ is a mincut and hence $|X_k|=\lambda$. Consider the neighbours of $v_k$. Each vertex $v\in N_{\bar{A}}(v_k)$ contributes at least one edge to $X_k$ (edges incident on $v_k$) and each vertex $v\in N_A(v_k)$ contributes at least one edge to $X_k$ since they are in $V_A$. Therefore
		\[\lambda=|X_k|\geq |N_{\bar{A}}(v_k)|+|N_A(v_k)|.
		\]
		But $deg(v_k)=|N_{\bar{A}}(v_k)|+|N_A(v_k)|$ contradicting the non-triviality of $v_k$.
	\end{enumerate}
	
	From cases I to III, if all $v\in N(v_k)$ are trivial, then $G\ncong X(G)$. So, suppose we have some $v_l\in N(v_k)$ such that $deg(v_l)>\lambda$.
	
	\begin{enumerate}[label=Case \Roman*:, wide, labelwidth=!, labelindent=0pt, resume]
		\item Suppose $v_l\in V_A$. The neighbours of $v_k$ contribute at least one edge to $X_k$ regardless of whether they are trivial or not as long as they are in $V_A$ or $V_{\bar{A}}$ and since $deg(v_k)>\lambda$ we have $|X_k|>\lambda$ and then $X_k$ cannot be a mincut.
	\end{enumerate}
	
	From cases I to IV we can now conclude the following:
	
	Firstly, if $v_i$ and $v_j$ are trivial and $v_k$ is non-trivial with $v_k\in A$, $v_i\in N_A(v_k)$ and $v_j\in V_{\bar{A}}$, then $v_i,\, v_k \in V_A$. Furthermore, if all $v\in N(v_k)$ are trivial then $X_k$ is not a mincut.
	
	Secondly, if all $v\in N(v_k)$ are in $V_A$ or $V_{\bar{A}}$, then $X_k$ is not a mincut. Importantly, this also applies if $V_A=A$. So there must be some $v_l\in N(v_k)$ such that $v_l\in A$ but $v_l \notin V_A$. If $v_l$ is trivial then $v_l\in V_A$, so $v_l$ must be non-trivial.
	
	\begin{enumerate}[label=Case \Roman*:, wide, labelwidth=!, labelindent=0pt, resume]	
		\item Let $v_l\in N_A(v_k)$ but $v_l\notin V_A$ and $deg(v_l)>\lambda$. This is different to the first case since here $v_l\in N_A(v_k)$, but $v_l$ is not trivial. By Lemmas \ref{lem:nontrivexists} and \ref{lem:trivcortriv} there exists a non-trivial mincut $X_l=\langle B,\bar{B} \rangle$ such that $X_l\sim v_l$. If $G\cong X(G)$ then  $X_l\cap X_k\neq \emptyset$. Let $V_B$ and $V_{\bar{B}}$ be the respective sets of vertices of attachment of $B$ and $\bar{B}$. Assume that there are some $v\in N_B(v_l)$ and $v\in V_{\bar{B}}$ that are trivial, so $v_l\in V_B$. 
		
		From Lemma \ref{lem:crossempty} we know that if $X_k$ and $X_l$ are crossing mincuts, then $X_k\cap X_l=\emptyset$, so $X_l$ is nested with respect to $X_k$ and $B\subset A$, so $|B|<|A|$. If $A\subset B$ we simply interchange $v_k$ and $v_l$ and the reasoning is the same.
		
		Applying the same reasoning following cases $I$ to $IV$ to $v_l$, we must have some non-trivial $v_m\in N_B(v_l)$ such that $v_m\notin V_B$ and by Lemma \ref{lem:nontrivexists} we have a non-trivial mincut $v_m\sim X_m=\langle C, \bar{C}\rangle$. Also $X_m\cap X_l\neq \emptyset$ which implies that $C\subset B$ with $|C|<|B|$. Again, assume that there  are some $v\in N_C(v_k)$ and $v\in V_{\bar{C}}$ that are trivial, so $v_m\in V_C$. 
		
		We continue the process of identifying some non-trivial, adjacent $v\notin V_{(\cdot)}$, which implies the existence of a nested non-trivial $X_{(\cdot)}$ and each successive vertex set is smaller than the previous one.
		
		Eventually we must get to a non-trivial $v_z\sim X_z=\langle Z,\bar{Z}\rangle$, such that $v_z\in N_Y(v_y)$, for some $v_y\sim X_y=\langle Y, \bar{Y}\rangle$, with $v_z\notin V_Y$ and $Z=V_Z$. If $v_z$ is trivial then it must be in $V_Y$. Recall from Lemma \ref{lem:cutvertex} that if $X_z$ is non-trivial then $V_Z>1$. Since $Z=V_z$, all neighbours of $v_z$ are in $V_Z$ or $V_{\bar{Z}}$ and hence $X_z$ cannot be a mincut.
		
		Consequently, we cannot have a non-trivial mincut for every non-trivial vertex and we conclude that $|V(G)|>|V(X(G))|$ so $G\ncong X(G)$.
		
		\item Finally, it remains to show that if $G$ is not super-$\lambda$ and $G$ has no trivial mincuts, then $G\ncong X(G)$. Let $X_k=\langle A, \bar{A} \rangle$ be a nontrivial mincut of $G$ where $\lambda(G)<\delta$ so $G$ has no trivial mincuts. Assume $X(G)\cong G$, then we know from Lemma \ref{lem:cutvertex} that $|V_A|>1$. Let $v_k$ be one of the vertices in $V_A$ and $v_l\in N_A(v_k)$ such that $v_l\notin V_A$. There must be such a $v_l$ otherwise all neighbours of $v_k$ are in $V_A$ or $V_{\bar{A}}$ which would mean that $X_k$ is not a mincut. Hence we also have $v_l\sim X_l=\langle B, \bar{B} \rangle$. If $G\cong X(G)$ we must have $X_l\cap X_k\neq \emptyset$ so $X_l$ is nested and $B\subset A$. As before, continuing the process of identifying adjacent vertices and corresponding non-trivial cuts leads to successively smaller vertex sets and eventually we will have a situation where $X_z=\langle Z,\bar{Z} \rangle$ is not a mincut, leading to the conclusion that $|X|<|V(G)|$ and hence $G\ncong X(G)$.
		
	\end{enumerate}
	
	We conclude that if $G$ is not super-$\lambda$ we cannot have $G\cong X(G)$.
	
\end{proof}

\section{Convergence of all graphs under the $X$-operator}
\label{sec:Xconv}

In \cite{mingraph}, it was shown that every graph is a mincut graph and that for every graph $G$, $X(G)$ is unique. But this implies that $G$ has infinitely many $X$-roots and is of infinite $X$-depth. In this section, we clarify what we mean by convergence or divergence under an operator. We show that no graph diverges under $X(\cdot)$ and postulate that in most cases iteration of the operator leads to the null graph, $K_0$, with no vertices or edges. For definitions on operator concepts, we refer the reader to \cite{grdynamics,prissurv}.

\begin{example}
	\label{eg:Xiter}
In Figure \ref{fig:Xiter}, iteration of the first graph leads to $K_0$. We note that if $G$ is disconnected then $\lambda=0$, we have no mincuts and hence $X(G)\cong K_0$. Since the mincut graph of a graph is unique, each of the infinite roots of the first graph in sequence I will lead to $K_0$, each of the infinite roots of the first graph  in sequence II will lead to $L(K_4)$. But $L(K_4)$ is $6$-regular and super-$\lambda$ and hence $X(L(K_4))\cong L(K_4)$. The Cartesian product $K_3\Box K_2$ in sequence III is maximally connected, that is $\lambda(K_3\Box K_2)=\delta(K_3\Box K_2)$ and is $3$-regular. However, it is not super-$\lambda$ since there is one non-trivial mincut, the three edges connecting the two copies of $K_3$ in the product, and it intersects all the trivial mincuts. Hence $X(K_3\Box K_2)\cong K_1\vee K_3\Box K_2$, the join of $K_1$ and $K_3\Box K_2$. But $K_1\vee K_3\Box K_2$ is super-$\lambda$ with $K_3\Box K_2$ the induced subgraph on vertices of minimum degree and hence $X(K_1\vee K_3\Box K_2)\cong K_3\Box K_2$. Each of the infinite roots of $K_3\Box K_2$ in sequence III will lead to the circuit $K_3\Box K_2\leftrightarrow  K_1\vee K_3\Box K_2$. We say that these graphs \emph{converge} to the respective graphs and/or circuits.

\begin{figure}[!h]
	\begin{center}
		\includegraphics[width=\textwidth]{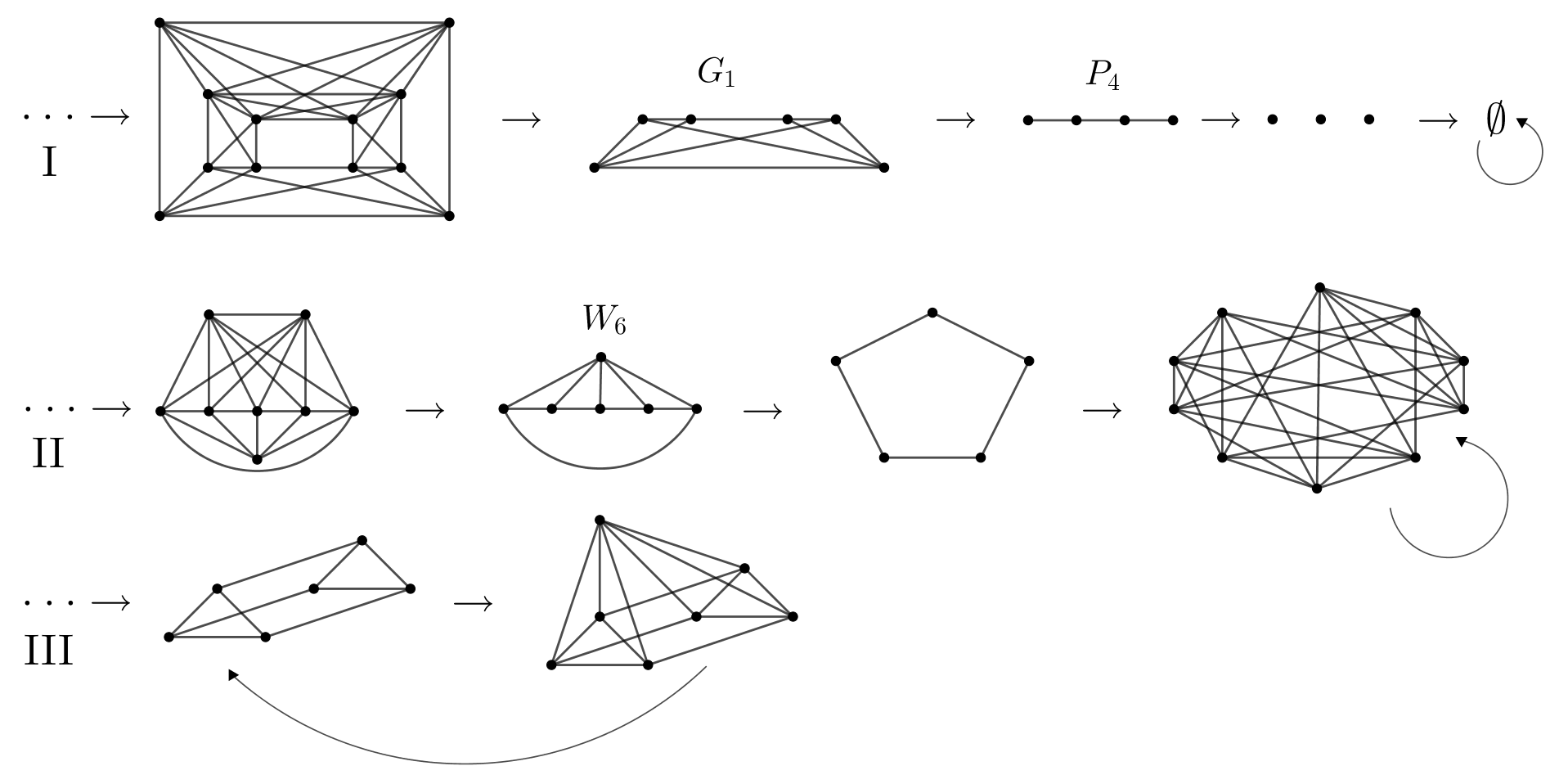}
	\end{center}
	\caption{Effect of the iteration of the mincut operator on some graphs.}
	\label{fig:Xiter}
\end{figure}

\end{example} 

\begin{definition}
	A graph $G$ is $\phi$-\emph{periodic} under an operator $\phi$ if $G\cong \phi^p(G)$ for some integer $p\geq 1$. The smallest such integer $p$ is called the $\phi$-\emph{period} of $G$.
	\label{def:period}
\end{definition}

\begin{definition}
	Let $G$ be a graph and $\phi$ an operator on $G$. $G$ is \emph{convergent} if $\{\phi^n(G)|n\in \mathbb{N} \}$ is finite. That is, $G$ is $\phi$-convergent if and only if some iterated $\phi$-graph is $\phi$-periodic. If $G$ is not convergent then it is \emph{divergent}.
	\label{def:conv}
\end{definition}

\begin{definition}
	A \emph{circuit} is any set of the form $\{G,\,\phi(G),\,\phi^2(G),\, \ldots,\, \phi^{p-1}(G),\, \phi^p(G)\cong G\}$.
	\label{def:circuit}
\end{definition}

Let $G$ be some convergent graph, then there is a unique circuit $M$ such that $\phi^n(G)\in M$ for sufficiently large $n$, see \cite{grdynamics}.

Do any graphs diverge under $X(\cdot)$? We usually distinguish $\phi$-convergent from $\phi$-divergent graphs by means of some parameter that increases under $\phi$, see \cite{prissurv}. With respect to the $X$-operator we consider the order, $|V|$, and size, $|E|$, of successive graphs under iteration of $X(\cdot)$ and the implications of increasing these numbers with respect to the number of mincuts of a graph, $|X|$, and the number of non-empty intersections, $|\cap X_i|$, of edge sets belonging to $X$.

\begin{definition}
	The \emph{edge density}, D, of a graph $G$ on $n$ vertices is the ratio of edges to the total number of edges possible, that is
	\[
	D=\frac{|E|}{\binom{n}{2}}=\frac{2|E|}{n(n-1)}.
	\]
\end{definition}

We recall from the definition of the mincut graph that the number of mincuts $|X|$ in $G$ is the number of vertices in $X(G)$ and the number of intersections of the mincuts in $G$ is the number of edges in $X(G)$. Now, the maximum number of edges in a simple connected graph is $\binom{n}{2}$, where every choice of two vertices are adjacent, and when this happens the graph is complete, which is fixed under the $X$-operator, since $X(K_n)\cong K_n$, and the graph converges. Furthermore, we know that the maximum number of mincuts in a graph of order $n$ is $\binom{n}{2}$ and this bound is tight for simple graphs when $G\cong C_n$, see \cite{dinic,lehel}, and $X(C_n)\cong L(K_n)$ which is fixed under $X(\cdot)$, since $X(L(K_n)\cong L(K_n)$. We also have other upper bounds on the number of mincuts of a graph depending on whether $\lambda$ is even or odd, see \cite{lehel}.

\begin{equation*}
	|X|\leq\begin{cases}
		\frac{2n^2}{(\lambda+1)^2}+\frac{(\lambda-1)n}{\lambda+1}, & \text{if $\lambda\geq 4$ and $\lambda$ is an even integer},\\
		(1+\frac{4}{\lambda+5})n, & \text{if $\lambda>5$ and $\lambda$ is an odd integer}.
	\end{cases}
\end{equation*}

Considering these bounds we should expect that for large $\lambda$, $|X|$ approaches $n$ and we have an upper limit on the number of vertices of $X(G)$ and that $X(G)$ must at some point either become fixed, periodic or disconnected. We note with reference to this last case that if at any stage of the iteration we have a mincut (subset of mincuts) that does not intersect with any other mincut (shares no element with any other subset of mincuts) then $X(G)$ is disconnected and converges to the null graph $K_0$.

The Cartesian product $K_n\Box K_2$ is maximally connected, that is $\lambda(K_n\Box K_2)=\delta(K_n\Box K_2)=n$ and is $n$-regular. However, it is not super-$\lambda$ since there is one non-trivial mincut, the $n$ edges connecting the two copies of $K_n$ in the product, and it intersects all the trivial mincuts. By the same reasoning used in Example \ref{eg:Xiter} for $K_3\Box K_2$ we note that $K_n\Box K_2$ is $2$-periodic. 
 
 We recall from Proposition \ref{prop:suffsuperl} that if $deg(u)+deg(v)\geq n$ and $G\ncong K_{n/2}\Box K_2$ then $G$ is super-$\lambda$. In the light of convergence we will use this sufficient condition since we know that $K_{n/2}\Box K_2$ is $2$-periodic, and hence converges by Definitions \ref{def:conv} and \ref{def:circuit}.

\begin{theorem}
	Let $G$ be a simple graph of order $n$ and size $m$ with minimum degree $\delta$ and edge-connectivity $\lambda$, then $G$ converges under iteration of the $X$-operator, that is $X^i(G)$ converges for sufficiently large $i$.
	\label{th:Xconv}
\end{theorem}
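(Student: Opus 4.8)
The plan is to reduce the theorem to a single boundedness statement. Write $G_i:=X^i(G)$ and $n_i:=|V(G_i)|$, and recall that $n_{i+1}$ equals the number of mincuts of $G_i$, while $|E(G_{i+1})|$ equals the number of intersecting pairs of mincuts of $G_i$. For each fixed $N$ there are only finitely many simple graphs of order at most $N$ up to isomorphism, so it is enough to show that $(n_i)_{i\ge 0}$ is bounded: if $n_i\le N$ for all $i$, then $\{G_i:i\ge0\}$ is a finite set of isomorphism classes, hence $G_j\cong G_k$ for some $j<k$, so $G_j$ is $X$-periodic and $G$ is convergent by Definitions \ref{def:period} and \ref{def:conv}. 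Thus the entire problem is to control how fast $n_i$ can grow.

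First I would dispatch the terminal configurations, each of which by itself already forces $(n_i)$ to be bounded. If some $G_i$ is disconnected, then $\lambda(G_i)=0$, $G_i$ has no mincuts, and $G_{i+1}\cong K_0$, a fixed point. If some $G_i$ is super-$\lambda$ and $r$-regular, then $G_{i+1}\cong G_i$ by Proposition \ref{prop:X(G)=G}. If some $G_i\cong K_{m/2}\Box K_2$, then $G_i$ is $2$-periodic, as observed after Definition \ref{def:circuit} and in Example \ref{eg:Xiter}. In each of these situations $G$ converges, so when proving boundedness we may assume that none of them ever occurs.

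Next I would localise the only way the order can increase. Fix $i$ and put $H:=G_i$, $n:=n_i$, $\lambda:=\lambda(H)$. If $\lambda=1$ the mincuts of $H$ are its bridges, which lie on no cycle and hence span a forest, so $n_{i+1}\le n-1$. If $H$ is super-$\lambda$ then, being non-regular (no $G_i$ is both super-$\lambda$ and regular, by the previous paragraph), every mincut of $H$ is the trivial cut at one of the at most $n-1$ vertices of degree $\delta(H)$, so again $n_{i+1}\le n-1$. Hence an increase $n_{i+1}>n_i$ forces $H$ to be not super-$\lambda$ with $\lambda\ge 2$, and then $n_{i+1}$ is the number of mincuts of $H$, to which Lehel's estimates apply: $n_{i+1}\le\bigl(1+\tfrac{4}{\lambda+5}\bigr)n$ when $\lambda$ is odd and $>5$, and $n_{i+1}\le\tfrac{2n^{2}}{(\lambda+1)^{2}}+\tfrac{(\lambda-1)n}{\lambda+1}$ when $\lambda$ is even and $\ge 4$, whereas for $\lambda\in\{2,3,5\}$ only the crude bound $n_{i+1}\le\binom{n}{2}$ is available and for $\lambda=4$ the even estimate is quadratic in $n$ and likewise permits growth. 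It then remains to show that such increases cannot accumulate: the idea is that an increase forces $H$ to be "cycle-like", with many mincuts relative to its order --- the extreme case being $C_n$ with $X(C_n)\cong L(K_n)$ --- and that for such $H$ the graph $X(H)$ is either of strictly smaller order, or super-$\lambda$ and $r$-regular (so $G$ converges), or of edge-connectivity large enough that the Lehel estimates of the preceding lines already forbid any further growth. Since the $n_i$ are non-negative integers, this confines them to a bounded set, and by the first paragraph the theorem follows. (Alternatively one could track the edge density $D(G_i)$ and apply Proposition \ref{prop:suffsuperl}(1), but the counting bounds are more directly usable here.)

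The step I expect to be the main obstacle is precisely this last one: ruling out the accumulation of increases in the not-super-$\lambda$, small-$\lambda$ range $\lambda\in\{2,3,4,5\}$ for graphs of large order, where a single application of $X$ can multiply the order (up to $\binom{n}{2}$, attained by $C_n$). Boundedness there cannot be read off from the counting inequalities alone; it must be extracted from the structure of mincut graphs of near-extremal graphs --- a quantitative refinement of the fact that $X(C_n)\cong L(K_n)$ is already super-$\lambda$ and regular --- together with a bookkeeping argument that excludes an order sequence oscillating between small and large values indefinitely. Once that structural input is available, the rest reduces to the routine case analysis sketched above.
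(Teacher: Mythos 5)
Your proposal is incomplete, and you say so yourself: the entire difficulty of the theorem is concentrated in the step you defer, namely showing that order increases cannot accumulate indefinitely when the iterates are not super-$\lambda$ and have small edge-connectivity ($\lambda\in\{2,3,4,5\}$, where the Lehel bounds are either unavailable or quadratic in $n$). Everything you do prove is correct and cleanly organised --- the reduction of convergence to boundedness of $n_i$, the dispatch of the terminal configurations ($K_0$, super-$\lambda$ regular, $K_{n/2}\Box K_2$), the observation that $\lambda=1$ gives at most $n-1$ bridges and that super-$\lambda$ non-regular gives $n_{i+1}\le n-1$ --- but none of it touches the case that actually permits growth, exemplified by $X(C_n)\cong L(K_n)$ where the order jumps from $n$ to $\binom{n}{2}$. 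A proof that stops at ``it must be extracted from the structure of mincut graphs of near-extremal graphs'' has named the theorem rather than proved it.

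For comparison, the paper closes (or at least attempts to close) this gap by a different mechanism: it argues that for the order to keep increasing one needs the number of mincuts to exceed $n$ at every stage, that this forces $\lambda$ to increase along the iteration, hence $\delta$ to increase (via $\lambda\le\delta$), and that once $\delta\ge n/2$ Proposition \ref{prop:suffsuperl}(1) forces the iterate to be super-$\lambda$ or isomorphic to $K_{n/2}\Box K_2$ --- and each of those outcomes was already shown to converge. In other words, the paper escapes the small-$\lambda$ regime not by structural analysis of near-extremal graphs but by the claim that sustained growth of both order and size drives $\lambda$ upward until a sufficient condition for super-$\lambda$ kicks in. If you want to complete your argument along your own lines, you would need a quantitative version of that claim (or an independent structural one); as it stands, your write-up is a correct reduction plus an honest admission that the core step is missing.
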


\begin{proof}
	We proceed by showing that no graph diverges under iteration of $X(\cdot)$. In order for $G$ to diverge we must have an increase in $n$, $m$ or both.
	
	Suppose $n$ increases but $m$ does not. If we get to a point where $n=m$ we have a cycle and the next iteration is $L(K_n)$, fixed under the operator. If $m=n-1$ we have a tree and the next iteration is disconnected leading to the null graph. If $m<n-1$ then $G$ is disconnected and $X(G)\cong K_0$. In other words, if the graph becomes sufficiently \emph{sparse} then $X^i(G)$ converges to the null graph.
	
	Suppose $m$ increases but $n$ does not. We get to $m=\binom{n}{2}$, the maximum number of edges possible. But then $G$ is complete and fixed under the operator. In other words, if the graph becomes sufficiently \emph{dense} $X^i(G)$ becomes fixed.
	
	Hence, we need both $n$ and $m$ to increase under iteration of the operator in order for the graph to diverge. In order for $n$ to increase we need $|X|$ to increase, and in order for $m$ to increase we need the number of intersecting mincuts $|\cap X_i|$ to increase as we iterate the operator. In short, for each $G_i$ in the iteration sequence we need $|X|\geq n$ and $|\cap X_i|\geq m$, where one of the inequalities has to be strict, but for this to happen we need $\lambda$ to increase. If $\lambda$ does not increase the size of each $X_i$ stays the same (or decreases) and eventually $n$ increases but $m$ does not. If $\lambda$ increases, $\delta$ increases since $\kappa \leq \lambda \leq \delta$, as originally proved by Whitney. Now if $\delta\geq \frac{n}{2}$, or equivalently, $deg(u)+deg(v)\geq n$ for any $u,v\in V(G)$, then $G$ is super-$\lambda$ or $G\cong K_{n/2}\Box K_2$, by Proposition \ref{prop:suffsuperl}.
	
	If $G\cong K_{n/2}\Box K_2$, then it is not diffcult to show that $G$ is $2$-periodic and hence converges.
	
	If $G$ is super-$\lambda$ and $r$-regular, then $G$ is fixed, by Theorem \ref{th:Xfixed}.
	
	If $G$ is super-$\lambda$ but not $r$-regular, then $|X|<|V(G)$, see the relevant part of the proof to Theorem \ref{th:Xfixed}. That is $|V(X(G))|<|V(G)|$, so $n$ decreases and the graph cannot diverge.
\end{proof}

\section{Further Discussion}

In this section, we conclude by introducing and exploring further topics and questions raised by the properties and characteristics of the mincut graph.

\subsection{Convergence to the null graph}

If $X(G)$ is disconnected, $X(X(G))$ is the null graph. That is, if there is any of the $X_i$, or subset of intersecting $X_i$, that does not intersect with any other of the $X_i$, or subset of intersecting $X_i$, then the operator will yield a disconnected graph and, hence, iteration leads to the null graph. In general, it would therefore seem that, in most cases, repeated application of $X$ should at some stage yield a disconnected graph $G$ with $\lambda(G)=0$ and hence $X(G)$ is the null graph with no vertices and no edges. 

\begin{conjecture}[Convergence to null graph]
	Let $G$ be a simple connected graph and $X(\cdot)$ the mincut operator. Then $X^k(G)\rightarrow K_0$ except under a finite number of conditions.\\
\end{conjecture}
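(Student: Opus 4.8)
The plan is to pin down exactly which graphs are the exceptions --- i.e.\ which simple connected $G$ have the property that no iterate $X^k(G)$ equals $K_0$ --- and then to show that this exceptional class is carved out by finitely many structural conditions. Since a disconnected graph has $\lambda=0$ and hence $X(G)\cong K_0$, and since $X(H)$ is disconnected whenever $H$ has a mincut, or a block of mutually intersecting mincuts, meeting no other mincut, a graph $G$ avoids $K_0$ if and only if every iterate $X^k(G)$ is connected. By Theorem~\ref{th:Xconv} the orbit $\{X^k(G):k\in\mathbb{N}\}$ is finite, so it is eventually contained in a unique circuit $M$ in the sense of Definition~\ref{def:circuit}; thus $G$ is exceptional precisely when the whole orbit, transient and circuit alike, consists of connected graphs, and in particular $M$ must be a circuit of connected graphs.

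The first substantial step is to classify all such ``connected'' circuits $M$. The fixed points are already known: by Theorem~\ref{th:Xfixed} they are exactly the super-$\lambda$, $r$-regular graphs, among them $K_n$, $K_{n,n}$ and $L(K_n)$. Example~\ref{eg:Xiter} and the subsequent discussion of $K_{n/2}\Box K_2$ supply an infinite family of period-$2$ circuits $K_{n/2}\Box K_2\leftrightarrow K_1\vee(K_{n/2}\Box K_2)$. I would then argue there are no others: along a circuit of period $p$ the order $n$, the size $m$ and the edge-connectivity $\lambda$ all return to their initial values, so none of them can change monotonically; combining this with $\kappa\le\lambda\le\delta$, the upper bounds on the number of mincuts $|X|$ recalled before Theorem~\ref{th:Xconv}, and the dichotomy of Proposition~\ref{prop:suffsuperl} (if $\delta\ge n/2$ then $G$ is super-$\lambda$ or $G\cong K_{n/2}\Box K_2$), one should be able to corner any connected circuit into one of the two families above, and in particular rule out periods $p\ge 3$.

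The final step is to absorb the pre-periodic tail. Unwinding the equivalence above, $G$ is exceptional iff $X(G)$ is connected and $X(G)$ is itself exceptional, so $G$ is either a member of one of the classified circuits or a connected $X$-root, with connected mincut graph, of such a graph, and so on down the chain. I would show that each pull-back step can add only boundedly much and that, modulo the (infinite, but explicitly described in \cite{mingraph}) families of $X$-roots, the admissible $G$ satisfy a fixed finite list of conditions --- essentially maximal edge-connectivity together with regularity, or the $K_{n/2}\Box K_2$-type exception. I expect this last step to be the main obstacle: $X(\cdot)$ is highly non-injective and a graph can \emph{grow} before it settles, as $C_n\mapsto L(K_n)$ shows, so bounding the transient and proving that ``connected at every stage'' is a genuinely finite constraint --- rather than merely controlling the eventual behaviour, which Theorem~\ref{th:Xconv} already gives --- is where the real difficulty lies, and is presumably why this is stated as a conjecture rather than a theorem.
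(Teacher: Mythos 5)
This statement is left as a conjecture in the paper --- Section 5 offers only the heuristic that a mincut (or block of mutually intersecting mincuts) disjoint from all others forces disconnection and hence $K_0$ two steps later --- and your proposal does not close it either: the two steps you flag as ``substantial'' are precisely the open problems. First, the classification of all connected circuits is not an argument but a hope; the paper itself explicitly asks, in its closing discussion on periodicity, whether there are $2$-periodic graphs other than $K_n\Box K_2$ and whether periods $p\ge 3$ occur, so ``one should be able to corner any connected circuit into one of the two families'' is assuming the answer to an open question. The observation that $n$, $m$ and $\lambda$ return to their initial values along a circuit only rules out strictly monotone behaviour; it does not interact usefully with the mincut-counting bounds, which are inequalities in one direction only, and Proposition~\ref{prop:suffsuperl} gives no dichotomy when $\delta<n/2$, which is exactly the regime where non-trivial circuits could live. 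Second, your pull-back step is not merely ``the main obstacle'' --- nothing in the paper or in your sketch bounds the transient, and since every graph has infinitely many $X$-roots and infinite depth, the set of exceptional graphs is certainly infinite, so the conjecture can only be about a finite list of \emph{conditions}, a notion neither the paper nor your proposal makes precise. Without formalising that, there is nothing to prove.

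One smaller but genuine error: your reduction ``$G$ avoids $K_0$ if and only if every iterate $X^k(G)$ is connected'' is false as stated. The graph $K_1$ is connected but has no edges and hence no mincuts, so $X(K_1)\cong K_0$; likewise $K_2$ is connected, $X(K_2)\cong K_1$, and the next iterate is $K_0$ (this is why Proposition~\ref{prop:X(G)=G} excludes $K_2$). The correct condition is that every iterate has a nonempty, connected \emph{mincut graph}, which is stronger than connectivity of the iterate itself. This matters because your entire subsequent analysis is phrased in terms of connected circuits rather than circuits whose members all have connected, nonempty mincut graphs.
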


\subsection{Connected mincut graphs}

A question that naturally arises and that is closely connected to the previous question, is under what conditions will $X(G)$ be connected. That is, what (if any) are the properties of $G$ that will ensure that every $X_i$, or subset of intersecting $X_i$, intersects with some other $X_i$, or intersecting subset of $X_i$. Furthermore, does the edge-connectivity of $X(G)$ tell us anything about the connectivity of $G$?

What leads to $G$ not converging to the null graph? If adjacent vertices are trivial then their mincuts intersect. If a trivial vertex is an element of the vertices of attachment of a non-trivial cut then their intersections are nonempty. It would therefore seem that a high proportion of trivial vertices to the total number of vertices guarantees a connected mincut graph. See for example the cycle: large cycles have sparse edge density but their mincuts have high edge density.

\subsection{Periodicity}

We know that there are graphs that $1$-periodic (fixed) under $X(\cdot)$ and we characterised such graphs as super-$\lambda$ and $r$-regular in this paper. We also know that there are graphs of period $2$. For example, the Cartesian product $K_n\Box K_2$ is such a class of graphs as was shown in the discussion on convergence. But are there other graphs that are $2$-periodic? Are there graphs that are have a higher periodicity than $2$? What would be the sufficient properties for such graphs to be $k$-periodic, for $k\geq 2$?

\bibliographystyle{abbrv}
\bibliography{cutset}
		
\end{document}